\documentclass[11pt,reqno]{amsart}

\usepackage[table]{xcolor} 
\usepackage[utf8]{inputenc}
\usepackage[T1]{fontenc}
\usepackage{amsmath,amsfonts}
\usepackage{algorithmic}
\usepackage{algorithm}
\usepackage{array}
\usepackage{textcomp}
\usepackage{stfloats}
\usepackage{url}
\usepackage{stmaryrd}
\usepackage{graphicx}
\usepackage{cite}
\usepackage{mathtools}
\usepackage{mathrsfs}
\usepackage{amsthm}
\usepackage{amssymb}
\usepackage[colorlinks=true]{hyperref}
\usepackage{microtype}
\usepackage{subcaption}  
\usepackage{textcomp}
\hypersetup{urlcolor=blue, citecolor=red}

\usepackage{fullpage}
\usepackage{pgfplots}
\usepackage[siunitx]{circuitikz}
\pgfplotsset{compat=1.15}
\usepackage{mathrsfs}
\usetikzlibrary{arrows}
\usetikzlibrary{arrows.meta}
\usetikzlibrary{positioning,fit}
\usetikzlibrary{pgfplots.groupplots} 
\usetikzlibrary{external}
\tikzexternalize[prefix=tikz-external/]
\tikzset{external/optimize=true}


\DisableLigatures{encoding = *, family = * }


 
\usepackage{pgf}
\definecolor{ballblue}{rgb}{0.13, 0.67, 0.8}
\definecolor{darkpastelgreen}{rgb}{0.01, 0.75, 0.24}
\definecolor{darkorange}{rgb}{1.0, 0.55, 0.0}
\definecolor{brightlavender}{rgb}{0.75, 0.58, 0.89}
\definecolor{cerisepink}{rgb}{0.93, 0.23, 0.51}
\definecolor{medioumgray}{rgb}{0.82, 0.82, 0.82}
\definecolor{lightgray}{rgb}{0.95, 0.95, 0.95}

\usepackage{hyperref}
\usepackage{cleveref}
\usepackage{mathtools}
\hypersetup{
    colorlinks,
    linkcolor={red!80!black},
    citecolor={green!80!black}
} 
\usepackage{amsopn}

\begingroup 
\theoremstyle{plain}
\newtheorem{theorem}{Theorem}[section]

\newtheorem{proposition}[theorem]{Proposition}

\theoremstyle{definition}

\newtheorem{remark}[theorem]{Remark}

\endgroup

\numberwithin{table}{section}
\numberwithin{equation}{section}
\numberwithin{figure}{section}

\def \*#1{\boldsymbol{#1}}

\newcommand{\NN}{\mathbb{N}}

\newcommand{\RR}{\mathbb{R}}

\newcommand{\SNN}{\mathrm{SNN}}
 
\newcommand{\MC}[1]{\mathcal{#1}}
\newcommand{\MB}[1]{\mathbb{#1}}
\newcommand{\inter}[1]{\llbracket #1 \rrbracket}

\newcommand{\lin}{\mathcal{L}^{\,\text{in}}}
\newcommand{\lhid}[1]{\mathcal{L}^{\,\text{hidd}}_{#1}}
\newcommand{\lout}{\mathcal{L}^{\,\text{out}}}
\newcommand{\lread}{\mathcal{L}^{\,\text{read}}}

\newcommand{\nin}{\mathcal{N}^{\,\text{in}}}
\newcommand{\nhid}[2]{\mathcal{N}^{\,\text{hidd}}_{#1,#2}}

\newcommand{\nout}[1]{\mathcal{N}^{\,\text{out}}_#1}
\newcommand{\nread}{\mathcal{N}^{\,\text{read}}}

\newcommand{\loss}{\text{loss}}

\title[]{Spiking Neural Networks: a theoretical framework for Universal Approximation and training}
\author{Umberto Biccari\textsuperscript{\,$\ast$}}  
\address{\textsuperscript{$\ast$}\, Chair of Computational Mathematics, DeustoTech, University of Deusto, Avenida de las Universidades 24, 48007 Bilbao, Basque Country, Spain} 
\email{umberto.biccari@deusto.es}
\thanks{This project has received funding from the European Research Council (ERC) under the European Union's Horizon 2030 research and innovation programme (grant agreement NO: 101096251-CoDeFeL) and from MINECO, Spanish Government, through the Grants TED2021-131390B-I00/ AEI/10.13039/501100011033 DasEl and PID2023-146872OB-I00-DyCMaMod} 

\begin{document}

\begin{abstract}
Spiking Neural Networks (SNNs) are widely regarded as a biologically inspired and energy‑efficient alternative to classical artificial neural networks. Yet, their theoretical foundations remain only partially understood. In this work, we develop a rigorous mathematical analysis of a representative SNN architecture based on Leaky Integrate‑and‑Fire (LIF) neurons with threshold‑reset dynamics. Our contributions are twofold. First, we establish a universal approximation theorem showing that SNNs can approximate continuous functions on compact domains to arbitrary accuracy. The proof relies on a constructive encoding of target values via spike timing and a careful interplay between idealized $\delta$‑driven dynamics and smooth Gaussian‑regularized models. Second, we analyze the qualitative behavior of spike times across layers, proving well‑posedness of the hybrid dynamics and deriving conditions under which spike counts remain stable, decrease, or in exceptional cases increase due to resonance phenomena or overlapping inputs. Together, these results provide a principled foundation for understanding both the expressive power and the dynamical constraints of SNNs, offering theoretical guarantees for their use in classification and signal processing tasks.
\end{abstract}

\maketitle

\section{Introduction}\label{sec:intro}

Spiking Neural Networks (SNNs) have emerged as a promising paradigm for developing energy efficient and biologically plausible models of computation. Unlike classical Artificial Neural Networks (ANNs), which process continuous-valued activations, SNNs transmit information through discrete spikes, closely emulating the signaling mechanisms of real neurons. This event-driven nature makes them particularly appealing for applications in neuromorphic hardware and real-time sensory processing, where sparse activity and low energy consumption are crucial \cite{maass1997networks,eshraghian2023training,gerstner2002spiking,pfeiffer2018deep}. At the same time, SNNs offer a conceptual bridge between neuroscience and machine learning, providing models that are both biologically plausible and computationally powerful.

Despite their conceptual appeal, training SNNs remains challenging due to the non-differentiable nature of the spiking mechanism and the inherent hybrid dynamics governing neuronal behavior. Recent advances \cite{eshraghian2023training} have sought to bridge this gap by importing ideas from deep learning and control theory to derive principled training algorithms that respect the underlying neurodynamics. Yet, the theoretical foundations of SNNs remain less developed than those of ANNs. Two fundamental questions, central to understanding their capabilities, are still only partially resolved. 

The first concerns expressive power: to what extent can SNNs approximate arbitrary nonlinear functions? Early milestones \cite{maass1996lower,maass1997networks} demonstrated that spiking networks are computationally as powerful as Turing machines, and subsequent works explored their approximation capabilities in both rate-based and temporal coding frameworks \cite{funahashi1993approximation,perez2021sparse,tavanaei2019deep}. More recently, surveys such as \cite{bouvier2019spiking,nunes2022spiking} and methodological advances like \cite{eshraghian2023training} have reinforced the role of SNNs within modern deep learning. Yet, most of these results rely on heuristic or asymptotic arguments, leaving a gap for constructive, mathematically rigorous proofs of universal approximation in realistic spiking models.

The second question relates to spike dynamics: how do the hybrid dynamical laws governing spiking neurons constrain or facilitate information transmission across layers? Although the dynamics of Leaky Integrate-and-Fire (LIF) and Hodgkin–Huxley neurons are well understood in computational neuroscience \cite{dayan2005theoretical,hodgkin1952quantitative}, there is still limited mathematical analysis of spike counts, their stability across layers, and the exceptional regimes \textendash\, such as resonance phenomena or overlapping inputs \textendash\, where they may increase. A deeper understanding of these mechanisms is crucial, since the expressive power of SNNs cannot be decoupled from the dynamical rules that govern their activity.

In this work, we contribute to filling both gaps. Our first main result is a constructive Universal Approximation Theorem for SNNs. We prove that SNNs built from LIF neurons with threshold-reset dynamics can approximate any continuous function on compact domains with arbitrary accuracy. The proof explicitly encodes target function values via spike timing, relying on a careful interplay between an idealized $\delta$-driven model\footnote{Here, $\delta$-driven means that the model's formulation involves Dirac $\delta$ distribution.} \textendash\, used for constructive encoding \textendash\, and a Gaussian-regularized model, which ensures well-posedness and differentiability. This dual modeling strategy is essential: the $\delta$-formulation provides the conceptual clarity needed for constructive analysis, while the regularized version ensures well-posed and practically implementable dynamics.

Our second contribution is a qualitative analysis of spike-time propagation across layers. We establish the well-posedness of the hybrid dynamical systems defining membrane potentials, and we identify conditions under which spike counts remain stable, decrease, or exceptionally increase. In particular, we show that under natural assumptions, spike counts form a non-increasing sequence across layers, with precise criteria describing the rare cases \textendash\, such as overlapping presynaptic spikes \textendash\, that may lead to increases. This analysis provides a rigorous mathematical explanation of how SNNs' expressive power interacts with their dynamical constraints.

These two perspectives complement one another: the universal approximation theorem highlights the expressive capacity of SNNs, while the qualitative study of spike times elucidates the dynamical mechanisms through which such capacity is realized or limited. Together, they provide a rigorous mathematical foundation for understanding the power and constraints of SNNs and deliver a principled theoretical foundation for their study. By combining a constructive universal approximation framework with a detailed dynamical analysis, we bridge the gap between the expressive potential of spiking architectures and the constraints imposed by their hybrid dynamics. In doing so, we complement recent empirical and algorithmic advances, offering rigorous guarantees that support the use of SNNs in classification, signal processing, and beyond.

This paper is organized as follows. In Section \ref{sec:framework}, we introduce the general architecture of the SNN under consideration, describing in detail the input, hidden, output, and readout layers together with their governing dynamics. Section \ref{sec:well_posed} is devoted to the well-posedness analysis of these hybrid dynamical systems and to the characterization of spike times across layers, providing conditions for stability and growth of spike counts. In Section \ref{sec:UA}, we present and prove our main result: a constructive Universal Approximation Theorem for SNNs, highlighting the interplay between $\delta$-driven and Gaussian-regularized dynamics. Section \ref{sec:training} develops a principled training framework based on surrogate gradients and optimal control methods, which enables gradient-based optimization of the proposed architecture. Section \ref{sec:experiments} reports simulation experiments that validate the theoretical findings and illustrate the practical behavior of the proposed framework. Finally, Section \ref{sec:conclusions} summarizes our contributions and outlines open questions for future research.

\section{General framework}\label{sec:framework}

As anticipated, SNNs process information in a fundamentally different way from classical ANNs. Rather than working with continuous activations, they rely on discrete events (spikes) that occur in time. These spikes serve as the basic units of communication between neurons, giving rise to dynamics that are both time-dependent and event-driven. This temporal nature makes SNNs closer to biological computation and motivates the mathematical framework we now introduce.

Consider a dataset $\MC{D}\coloneqq\{(\*x_i,y_i)\}_{i=1}^N$ with $\*x_i\in\RR^d$ and $y_i\in\{0,1\}$ for all $i\in\inter{N}\coloneqq \{1,\ldots,N\}$. We want to classify $\MC{D}$ employing SNNs. In particular, let us focus on the following architecture (see Figure \ref{fig:architecture}):
\begin{itemize}
	\item[1.] One input layer $\lin$ with one neuron $\nin$ whose behavior is described by a \textbf{membrane potential}  
	\begin{align*}
		v[\*x_i](t): \RR \to \RR, \quad \text{for all } t\in(0,T),
	\end{align*}
	receiving as input the data $\{\*x_i\}_{i=1}^N$. 
    \item[2.] $L\geq 1$ hidden layers $\{\lhid \ell\}_{\ell=1}^L$, each one of them with $P\geq 1$ neurons $\{\nhid \ell p\}_{p=1}^P$ with membrane potential $\xi_{\ell,p}$. Each of these hidden layers receives as input the output of the layer immediately previous to it, that is, 
	\begin{displaymath}
        \begin{array}{ll} 
		      \*\xi_1 = \*\xi_1[v](t): \RR \to \RR, & \quad \text{for all } t\in(0,T),
            \\[5pt]
            \*\xi_{\ell+1} = \*\xi_{\ell+1}[\*\xi_\ell](t): \RR \to \RR, & \quad \text{for all } t\in(0,T) \text{ and } \ell\in\inter{L-1},
        \end{array}
	\end{displaymath}
    where for all $\ell\in\inter{L}$, $\*\xi_\ell = (\xi_{\ell,1},\ldots,\xi_{\ell,P})\in\RR^P$ denotes the vector collecting the $P$ membrane potentials of $\lhid \ell$. 
	\item[3.] One output layer $\lout$ composed of $P$ neurons $\{\nout p\}_{p=1}^P$ with membrane potentials 
	\begin{align*}
		u_p[\xi_{L,p}](t): \RR\to \RR, \quad \text{for all } t\in(0,T) \text{ and } p\in\inter{P},
	\end{align*}
    each one of them receiving information of the $p$-th neuron in the last hidden layer.
    \item[4.] A final readout layer $\lread$ with one readout neuron $\nread$ aggregating the outputs of $\{\nout p\}_{p=1}^P$ and performing classification.
\end{itemize}

\begin{figure}[!ht]
     \centering
     \includegraphics[scale=0.95]{./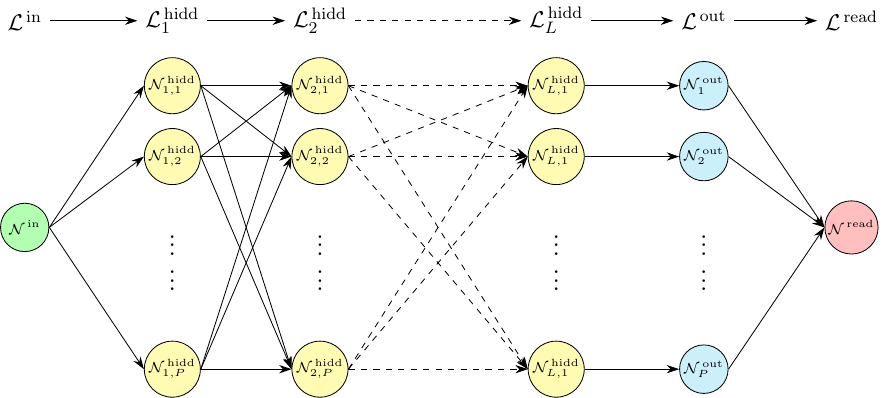}
     \caption{Scheme of the employed SNN architecture. The network is organized into an input layer, several hidden layers, and an output layer. Information is transmitted forward through discrete spike events, and the final layer provides the features used for prediction.}\label{fig:architecture}
\end{figure}

\subsection{Modeling of the membrane potentials}

For the membrane potentials we will employ the \textit{Leaky Integrate}-\textit{and}-\textit{Fire} (LIF) model, which consists in modeling each neuron as a low-pass filter circuit with one resistor and one capacitor. This modeling is applied uniformly across the network: input, hidden, and output layers are all described by LIF-type dynamics. 

From a biological perspective, the LIF model distills key mechanisms of neuronal behavior into a mathematically tractable form. In real neurons, the membrane potential arises from the flow of ions across the cell membrane through specialized channels, while incoming signals arrive as synaptic currents that transiently alter this potential \cite{hodgkin1952quantitative,lapicque1907recherches}. When the accumulated depolarization crosses a critical threshold, the neuron emits an action potential (a spike), after which ion pumps and channel dynamics restore the potential to its resting state \cite{llinas2002thalamocortical}. The LIF model abstracts this process by representing the membrane as an electrical RC circuit: the capacitor mimics the ability of the membrane to store charge, the resistor accounts for the natural leakage of ions through open channels, and synaptic inputs are modeled as external currents charging the circuit. Spiking then occurs when the voltage across this circuit exceeds a prescribed threshold, triggering an instantaneous reset that mirrors the refractory recovery of biological neurons \cite{dayan2005theoretical,gerstner2002spiking,gerstner2014neuronal,koch2004biophysics}.

\begin{figure}[!ht]
     \begin{subfigure}[t]{0.45\textwidth}
         \centering
         \begin{circuitikz}[american voltages]
         	
         	\coordinate (G0) at (0,0);
         	\coordinate (N)  at (0,3);
         	\coordinate (Rtop) at (1,3);
         	\coordinate (Rbot) at (1,0);
         	\coordinate (Ctop) at (5,3);
         	\coordinate (Cbot) at (5,0);
         	\coordinate (Vbot) at (1.2,0);
         	
         	\draw (G0) node[ground] {} to[I, i>^=$I(t)$] (N);
         	
         	\draw (N) -- (Rtop) -- (Ctop);
         	
         	\draw (Rtop) to[R, l=$R$] (Rbot);
         	\draw (Ctop) to[C, l=$C$] (Cbot);
         	\draw (Cbot) to [capacitor, l=$V(t)$] (Vbot) -- (Rbot);   
         \end{circuitikz}          
     \end{subfigure}
     \begin{subfigure}[t]{0.45\textwidth}
         \centering
         \includegraphics[scale=0.95]{./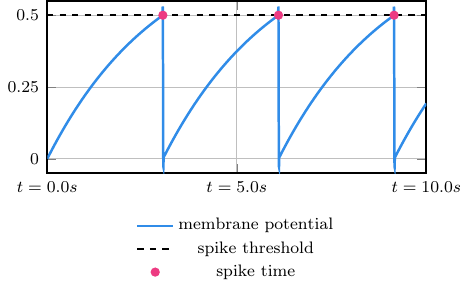}         
     \end{subfigure}
     \caption{General scheme of the LIF model. Left: the RC circuit representation, where the input current $I(t)$ charges the membrane potential $V(t)$. Right: an example of the resulting spiking behavior when $V(t)$ crosses a threshold and is reset. In Sections \ref{subsec:input}, \ref{subsec:hidden} and \ref{subsec:output}, the specific forms of $I(t)$ and $V(t)$ will be detailed for the input, hidden, and output layers of the SNN.}\label{fig:LIF}
\end{figure}

In our architecture, we adopt this LIF abstraction uniformly across all layers, but with a distinction motivated by function: in the input and hidden layers we retain the threshold–reset mechanism to reflect the discrete, event-driven nature of biological spiking, whereas in the output layer we omit the reset so that the potentials evolve continuously and provide stable features for the final static readout. This design preserves the biological inspiration of spike-based communication while ensuring a mathematically well-posed framework suitable for universal approximation.

We now make these modeling choices precise by writing down the dynamical systems that govern the evolution of the membrane potentials in the input, hidden, and output layers, as well as the readout ansatz. The analysis of well-posedness for the LIF models and the characterization of spike times are deferred to Section \ref{sec:well_posed}. 

\subsubsection{Input layer}\label{subsec:input} 

Let $(0,T)$ with $T>0$ be a given time interval, $\theta_v>0$ denote the spike threshold for $\nin$ and set $\tau_v\coloneqq R_vC_v$, with $(R_v,C_v)$ denoting the neuron's resistor and capacitor. For all $i\in\inter{N}$, the presynaptic neuron's potential $v=v[\*x_i]$ evolves according to the hybrid dynamical system 
\begin{align}\label{eq:model_pre}
    \begin{cases}
        \displaystyle\dot{v}(t) = \frac{1}{\tau_v} \Big(-v(t) + \langle \*a,\*x_i\rangle\Big), & t\in (0,T)\setminus\MB{T}_0
        \\
        v(0) = 0 	
        \\
        v_k^- = \theta_v,\quad v_k^+ = 0 
    \end{cases}\tag{\text{In}}
\end{align}
where $\*a\in\RR^d$ is a trainable parameter modulating the input strength, 
\begin{align*}
    \MB{T}_0 = \Big\{t_{0,k}\in (0,T)\,:\, k\in\NN\text{ and } v(t_{0,k})=\theta_v\Big\}
\end{align*}
denotes the sequence of spike times, and 
\begin{align*}
    v_k^\pm \coloneqq \lim_{t\to t_{0,k}^\pm} v(t).
\end{align*}

\subsubsection{Hidden layers}\label{subsec:hidden}

For all $\ell\in\inter{L}$ and $p\in\inter{P}$, let $\theta_{\ell,p}>0$ denote the spike threshold for the neuron $\nhid \ell p$ and set $\tau_{\ell,p}\coloneqq R_{\ell,p} C_{\ell,p}$, with $(R_{\ell,p},C_{\ell,p})$ denoting the neuron's resistor and capacitor. The hidden layers neurons' potential $\xi_{\ell,p}$ evolves according to the hybrid dynamical system 
\begin{align}\label{eq:model_hidd}
    \begin{cases}
        \displaystyle\dot{\xi}_{\ell,p}(t) = \frac{1}{\tau_{\ell,p}} \Big(-\xi_{\ell,p}(t) + \omega_{\ell,p} J_\ell(t)\Big), & \displaystyle t\in (0,T)\setminus\MC{T}_{\ell,p}
        \\
        \xi_{\ell,p}(0) = 0 	
        \\
        \xi_{\ell,p,k}^- = \theta_{\ell,p},\quad \xi_{\ell,p,k}^+ = 0 
    \end{cases}\tag{\text{Hidd}} 
\end{align}
where $\*\omega_\ell=(\omega_{\ell,1},\ldots\omega_{\ell,P})\subset\RR^P$ are trainable parameters, 
\begin{align*}
    \MC{T}_{\ell,p} = \Big\{t_{\ell,p,k}\in (0,T)\,:\, k\in\NN\text{ and } \xi_{\ell,p}(t_{\ell,p,k})=\theta_{\ell,p}\Big\}
\end{align*}
denotes the sequence of spike times, 
\begin{align*}
    \xi_{\ell,p,k}^\pm \coloneqq \lim_{t\to t_{\ell,p,k}^\pm} \xi_{\ell,p}(t),
\end{align*}
and where we have defined the input current
\begin{align}\label{eq:input_J}
    J_\ell(t) \coloneqq \sum_{t^\ast\in\MB{T}_{\ell-1}}\MC{G}_{\mu,t^\ast}(t), \quad\text{ for all } \ell\in\inter{L},
\end{align}
with  
\begin{align*}
    \MC{G}_{\mu,t^\ast}(t) \coloneqq \frac{1}{\mu\sqrt{2\pi}}e^{-\frac{(t-t^\ast)^2}{2\mu^2}}	
\end{align*}
a Gaussian distribution centered at $t^\ast$ with amplitude $\mu>0$ and where  
\begin{align*}
    \MB{T}_{\ell-1} = \bigcup_{p=1}^P \MC{T}_{\ell-1,p}
\end{align*}
collects the spike times generated by all neurons in the $(\ell-1)$-th hidden layer (for $\ell>1$) or, in the case $\ell=1$, by the input neuron.

\begin{remark}\label{rem:mollification}
In a typical SNN architecture, information among the neurons/layers is transmitted through \textbf{spikes}. To describe this phenomenon, one should use the following hybrid ODEs 
\begin{align}\label{eq:model_hidd_deltas}
    \begin{cases}
        \displaystyle\dot{\xi}_{\ell,p}(t) = \frac{1}{\tau_{\ell,p}} \left(-\xi_{\ell,p}(t) + \omega_{\ell,p} \sum_{t^\ast\in\MB{T}_{\ell-1}} \delta(t-t^\ast)\right), & \displaystyle t\in (0,T)\setminus\MC{T}_{\ell,p}
        \\
        \xi_{\ell,p}(0) = 0 	
        \\
        \xi_{\ell,p,k}^- = \theta_{\ell,p},\quad \xi_{\ell,p,k}^+ = 0 
    \end{cases},		
\end{align}
which models more faithfully the impulsive nature of spike transmission. However, due to the presence of the Dirac deltas, \eqref{eq:model_hidd_deltas} is not well-posed in standard ODE theory: the right-hand side is only defined in the sense of distributions, and solutions are not classical trajectories \cite{lakshmikantham1989theory}.

For this reason, for the rigorous analysis in this work, we have decided to focus on models like \eqref{eq:model_hidd} where each delta $\delta(t-t^*)$ is replaced by a smooth Gaussian kernel $G_{\mu,t^*}(t)$. This mollification process is of course motivated by the weak-$\ast$ convergence of $\MC{G}_{\mu,t^\ast}(t)$ to $\delta(t - t^\ast)$: for any test function $\varphi \in C_0^\infty(\RR)$, we have
\begin{align*}
    \lim_{\mu \to 0^+} \int_\RR G_{\mu,t^\ast}(t) \varphi(t) \, dt = \varphi(t^\ast) = \int_\RR \delta(t - t^\ast)\varphi(t) \, dt.
\end{align*}

At the same time, notice that one may still write a natural \textit{formal} solution of \eqref{eq:model_hidd_deltas} by interpreting each delta impulse as producing an instantaneous exponential response in the membrane potential. Concretely, if $\{t^*\}$ are the input spike times, one obtains a symbolic expression of the form
\begin{align*}
    \xi_{\ell,p,\delta}(t) = \frac{\omega_{\ell,p}}{\tau_{\ell,p}} \sum_{t^* \in \MB T_{\ell-1}} e^{-\frac{t-t^*}{\tau_{\ell,p}}},    
\end{align*}
interleaved with threshold-triggered resets. This explicit representation, though heuristic, will later be exploited in the proof of Theorem \ref{thm:UA} to construct spike trains that encode prescribed target values.

In summary, throughout the paper, the delta formulation serves as an idealized proxy for intuition and constructive arguments, while the Gaussian model underpins the rigorous analysis and training framework.
\end{remark}

\subsubsection{Output layer}\label{subsec:output}

Also the postsynaptic neurons $\{\nout p\}_{p=1}^P$ will be modeled as an RC circuit, receiving as inputs the spiking information from the last hidden layer's neurons $\{\nhid L p\}_{p=1}^P$. In particular, we will consider the dynamical systems
\begin{align}\label{eq:model_post} 
    \begin{cases}
        \displaystyle\dot u_p(t) = \frac{1}{\tau_u} \Big(-u_p(t) + w\Phi_p(t)\Big) & t\in(0,T)
        \\
        u_p(0)=0	
    \end{cases}, \quad\text{ for all } p\in\inter{P}, \tag{\text{Out}}		
\end{align}
with $\tau_u>0$, input current
\begin{align}\label{eq:input_Phi}
    \Phi_p(t) \coloneqq \sum_{t^\ast\in\MB{T}_{L,p}} \MC{G}_{\mu,t^\ast}(t),
\end{align}
and $w\in\RR$ a trainable parameter common to each postsynaptic neuron. Also in this case, \eqref{eq:model_post} is a smooth approximation of 
\begin{align}\label{eq:model_post_deltas}
    \begin{cases}
        \displaystyle\dot{u}_p(t) = \frac{1}{\tau_u} \left(-u_p(t) + w \sum_{t^\ast\in\MB{T}_{L,p}} \delta(t-t^\ast)\right), & \displaystyle t\in (0,T)
        \\
        u_p(0) = 0 	        
    \end{cases},		
\end{align}
that we introduce to simplify the analysis.

\subsubsection{Readout layer}\label{subsec:readout}

The final stage of the network is a static readout layer, acting on the membrane potentials produced by the output neurons at the final time $T$. Unlike the preceding layers, this component does not evolve dynamically but instead implements a fixed functional of the form
\begin{align*}
    \MC R(\*u(T))\coloneqq \sum_{p=1}^P \nu_p\sigma\big(u_p(T)-\theta_u\big),    
\end{align*}
where $\*u=(u_1,\ldots,u_P)$ denotes the vector of output potentials, $\theta_u>0$ is a given threshold, $\sigma:\RR\to\RR$ is a Lipschitz non-polynomial activation function, and $\*\nu = (\nu_1,\ldots,\nu_P)\in\RR^P$ are trainable parameters.

This readout realizes a Cybenko-type ansatz \cite{leshno1993multilayer}: the continuous features $u_p(T)$ generated by the dynamical system are non-linearly combined with trainable weights to produce the final network output. The choice of this structure is crucial, since it ensures the universal approximation property of the architecture that will be established in Section \ref{sec:UA}.

\subsubsection{Full SNN mapping}\label{subsec:SNN}

Combining the dynamical evolution of the input, hidden, and output layers with the static readout functional, we obtain a mapping from the input space $\RR^d$ to the real line. For an input datum $x\in\RR^d$, let
\begin{align*}
    v(t,\*x)\in\RR, \quad \{\*\xi_\ell(t,\*x)\}_{\ell=1}^L\subset\RR^P, \quad \*u(t,\*x)\in\RR^P
\end{align*}
denote the membrane potentials of the input, hidden, and output layers, respectively, at time $t$, governed by the dynamical systems specified in Subsections \ref{subsec:input}-\ref{subsec:hidden}-\ref{subsec:output}. The network output corresponding to $\*x$ is then defined by applying the readout functional of Subsection \ref{subsec:readout}:
\begin{align}\label{eq:SNN}
    \SNN(\*x)\coloneqq \MC R(\*u(T,\*x)) = \sum_{p=1}^P \nu_p\sigma\big(u_p(T,\*x)-\theta_u\big).    
\end{align}

This mapping $\SNN:\RR^d\to\RR$ will serve as the fundamental ansatz of the model. In Section \ref{sec:UA} we will show, in the spirit of Cybenko’s analysis \cite{cybenko1989approximation,leshno1993multilayer}, that the class of such mappings possesses the universal approximation property.

Finally, we emphasize that the explicit dependence on $\*x$ is shown here for the sake of clarity, in order to make the network mapping precise. In the dynamical systems of Subsections \ref{subsec:input}-\ref{subsec:hidden}-\ref{subsec:output} this dependence was left implicit, and in Section \ref{sec:well_posed} it will again be suppressed in the notation so as to focus on the dynamical behavior of the LIF models themselves.

\section{Well-posedness analysis}\label{sec:well_posed}

We discuss here the well-posedness and some elementary properties of the dynamical systems describing the evolution of the neurons membranes' potentials. Our main interest will be to characterize the spike times at which information is transmitted from one layer to the next one.

\subsection{Input layer}\label{subsec:input_wp}

To set the stage for our well-posedness analysis, we first examine the input layer, where the emergence of spike times can be described explicitly.

\begin{proposition}\label{prop:wp_input}
For all $\*x_i\in\RR^d$, $\*a\in\RR^d$, $0<\tau_v,\theta_v\in\RR$ and $T>0$ there exists a unique solution $v$ of the dynamical system \eqref{eq:model_pre} such that 
\begin{align*}
    v\in C^\infty\big((0,T)\setminus\MB{T}_0\big),
\end{align*}
where $\MB{T}_0$ denotes the sequence of spike times $\{t_{0,k}\}\subset(0,T)$ at which $v(t_{0,k})=\theta_v$. Moreover, $\MB{T}_0$ can be characterized as follows:
\begin{itemize}
    \item[1.] if $\langle \*a,\*x_i\rangle\leq\theta_v$, then $\MB{T}_0=\varnothing$;
    \item[2.] if $\langle \*a,\*x_i\rangle>\theta_v$, let us  define 
    \begin{align}\label{eq:beta}
	      \beta(\*a,\*x_i)\coloneqq \tau_v\ln\left(\frac{\langle \*a,\*x_i\rangle}{\langle \*a,\*x_i\rangle-\theta_v}\right)
    \end{align}
    and set $K\coloneqq \lfloor T\beta^{-1}\rfloor$. Then
    \begin{align}\label{eq:spiking_pre}
        \MB{T}_0=\begin{cases}
            \varnothing & \text{ if } K=0
            \\
            \{t_{0,k}\}_{k=1}^K \text{ with } t_{0,k}\coloneqq k\beta(a) & \text{ if } K\geq 1.      
        \end{cases}
    \end{align}
    In particular, the spike times are either the empty set or a growing periodic sequence with period $\beta$.
\end{itemize}
\end{proposition}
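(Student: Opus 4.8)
The plan is to exploit the fact that, between consecutive resets, the system \eqref{eq:model_pre} is a \emph{linear autonomous} scalar ODE, which can be integrated in closed form. Writing $c\coloneqq\langle\*a,\*x_i\rangle$ for brevity, the unique solution of $\dot v=\tau_v^{-1}(-v+c)$ with initial datum $v(s_0)=0$ is the explicit exponential $v(t)=c\bigl(1-e^{-(t-s_0)/\tau_v}\bigr)$. Since this expression is manifestly $C^\infty$ in $t$, smoothness of $v$ on $(0,T)\setminus\MB{T}_0$ follows immediately once the spike times are located, and local existence/uniqueness on each inter-spike interval is guaranteed by Picard--Lindel\"of (or directly by the closed form). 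The entire trajectory is then reconstructed by concatenating these smooth pieces across the instantaneous resets $v_k^-=\theta_v\mapsto v_k^+=0$.

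First I would treat the no-spike regime. Starting from $v(0)=0$, the closed form gives $v(t)=c\,(1-e^{-t/\tau_v})$, which is strictly increasing when $c>0$ and bounded above by its asymptote $c$, while it stays nonpositive when $c\le 0$. In either case, if $c\le\theta_v$ then $v(t)<c\le\theta_v$ (resp.\ $v(t)\le 0<\theta_v$) for all $t>0$, so the threshold is never attained and $\MB{T}_0=\varnothing$, establishing item~1. In the firing regime $c>\theta_v$, solving $c(1-e^{-t/\tau_v})=\theta_v$ yields the first crossing time $t=\tau_v\ln\!\bigl(c/(c-\theta_v)\bigr)=\beta(\*a,\*x_i)$, which is exactly \eqref{eq:beta}.

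The core of item~2 is then a periodicity argument by induction on the spike index. After the first reset at $t=\beta$, the potential restarts from $v(\beta^+)=0$ and obeys the \emph{same} autonomous ODE; by time-translation invariance the solution on $[\beta,2\beta)$ is $v(t)=c\bigl(1-e^{-(t-\beta)/\tau_v}\bigr)$, hitting $\theta_v$ precisely at $t=2\beta$. Iterating, I would show that the $k$-th reset occurs at $t_{0,k}=k\beta$ and that the arc between $t_{0,k}$ and $t_{0,k+1}$ is an exact copy of the initial one, so that $\MB{T}_0$ is the periodic lattice $\{k\beta\}_{k\ge1}$ intersected with $(0,T)$. Counting admissible indices gives $k\beta<T\iff k<T/\beta$, hence $K=\lfloor T\beta^{-1}\rfloor$ spikes when $K\ge1$ and $\MB{T}_0=\varnothing$ when $\beta\ge T$ (i.e.\ $K=0$), recovering \eqref{eq:spiking_pre}.

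I expect the only genuine subtlety to be a careful formalization of the \emph{hybrid} solution concept: one must make precise that a solution is a piecewise-$C^\infty$ function whose left limit at each $t_{0,k}$ equals $\theta_v$ and whose right limit is reset to $0$, and verify that this concatenation is well defined and unique, with no Zeno accumulation since the inter-spike gap is the fixed constant $\beta>0$. A secondary, essentially cosmetic point is the boundary case $T/\beta\in\NN$, where $K\beta=T$ lies on the endpoint of the open interval $(0,T)$, so that the nominal spike $t_{0,K}=T$ is excluded and the count is off by one; this is a measure-zero set of parameters that I would dispatch with a one-line remark rather than a separate argument.
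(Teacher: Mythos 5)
Your proposal is correct and follows essentially the same route as the paper's proof: integrate the linear ODE in closed form between resets, dispose of the no-spike regime $\langle\*a,\*x_i\rangle\le\theta_v$ by the bound $v(t)<\langle\*a,\*x_i\rangle$ (or $v\le 0$), locate the first crossing at $\beta$, and iterate by time-translation invariance to get the periodic lattice $\{k\beta\}$ truncated at $K=\lfloor T\beta^{-1}\rfloor$. Your added remarks on the hybrid solution concept, the absence of Zeno accumulation, and the endpoint case $K\beta=T$ are sound refinements that the paper passes over silently, but they do not change the argument.
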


\begin{proof}
First of all, notice that the solution to \eqref{eq:model_pre} can be computed explicitly, by superposing the solution of 
\begin{align*}
    \begin{cases}
        \displaystyle \dot v(t) = \frac{1}{\tau_v} \Big(-v(t) + \langle \*a,\*x_i\rangle\Big), & t\in (0,T)
        \\
        v(0) = 0 	
    \end{cases},		
\end{align*}
which is given by  
\begin{align}\label{eq:v_solution}
    v(t) = \langle \*a,\*x_i\rangle \left(1-e^{-\frac{t}{\tau_v}}\right),
\end{align}
with the reset mechanism
\begin{align}\label{eq:reset}	
    v_k^- = \lim_{t\to t_{0,k}^-} v(t) = \theta_v, \quad\quad v_k^+ = \lim_{t\to t_{0,k}^+} v(t) = v_k^- -\theta_v = 0. 
\end{align}

This reset mechanism \eqref{eq:reset} is activated whenever $v$ reaches the threshold $\theta_v$ during the time interval $(0,T)$. This happens or not, depending on the input $\langle \*a,\*x_i\rangle$.

\medskip
\noindent\underline{Case 1: $\langle \*a,\*x_i\rangle\leq 0$.} In this case, the solution \eqref{eq:v_solution} is always constant or decreasing in time. Together with $v(0)=0$, this implies that $v(t)\leq 0$ for all $t\in(0,T)$. Since $\theta_v>0$, we therefore have that $\MB{T}_0=\varnothing$ and $v\in C^\infty(0,T)$.

\medskip 
\noindent\underline{Case 2: $0<\langle \*a,\*x_i\rangle\leq\theta_v$.} In this case, the solution \eqref{eq:v_solution} is increasing in time and fulfills the bound
\begin{align*}
    0\leq v(t) < \langle \*a,\*x_i\rangle, \quad\text{ for all } t\in(0,T).
\end{align*}

This in particular implies that $v(t) < \theta_v$ for all $t\in(0,T)$. Hence, we have again that $\MB{T}_0=\varnothing$ and $v\in C^\infty(0,T)$.

\medskip 
\noindent\underline{Case 3: $\langle \*a,\*x_i\rangle>\theta_v$.} Suppose now that $\langle \*a,\*x_i\rangle>\theta_v$. In this case, $v$ grows from $v(0)=0$ and reaches for the first time the threshold $\theta_v$ at a certain $t_{0,1}>0$, which can be computed explicitly by imposing in \eqref{eq:v_solution} that $v(t_{0,1})=\theta_v$. This gives
\begin{align*}
    t_{0,1} = \tau_v\ln\left(\frac{\langle \*a,\*x_i\rangle}{\langle \*a,\*x_i\rangle-\theta_v}\right) = \beta(\*a,\*x_i).
\end{align*}

Notice that, since $\langle \*a,\*x_i\rangle>\theta_v$, this value $t_{0,1}$ is defined and strictly positive. Now, if $\beta>T$, i.e. if $K=\lfloor T\beta^{-1}\rfloor=0$, then $t_{0,1}\not\in (0,T)$. Hence, once again, we have that $\MB{T}_0=\varnothing$ and $v\in C^\infty(0,T)$. Otherwise, we can split 
\begin{align*}
    (0,T) = (0,t_{0,1})\cup\{t_{0,1}\}\cup (t_{0,1},T)    
\end{align*}
and the solution of \eqref{eq:model_pre} is given by
\begin{equation}\label{eq:v_solution_split1}
    \begin{array}{ll}
        \displaystyle v(t) = \langle \*a,\*x_i\rangle \bigg(1-e^{-\frac{t}{\tau_v}}\bigg), & \text{ if } t\in(0,t_{0,1})
        \\[10pt]
        \displaystyle v(t_{0,1}^-) = \theta_v,\;\; v(t_{0,1}^+) = 0
        \\[10pt]
        \displaystyle v(t) = \langle \*a,\*x_i\rangle \bigg(1-e^{-\frac{t-t_{0,1}}{\tau_v}}\bigg), & \text{ if } t\in(t_{0,1},T)
    \end{array}.
\end{equation}

Over the time interval $(t_{0,1},T)$, this solution \eqref{eq:v_solution_split1} grows again from $v(t_{0,1}^+)=0$ and reaches for the second time the threshold $\theta_v$ at a certain $t_{0,2}>t_{0,1}$, which can be once again computed explicitly by imposing in \eqref{eq:v_solution_split1} that $v(t_{0,2})=\theta_v$. This gives
\begin{align*}
    t_{0,2} = t_{0,1}+\tau_v\ln\left(\frac{\langle \*a,\*x_i\rangle}{\langle \*a,\*x_i\rangle-\theta_v}\right) = t_{0,1}+\beta(\*a,\*x_i) = 2\beta(\*a,\*x_i).
\end{align*}

Clearly, this second spike time $t_{0,2}$ will belong to $\MB{T}_0$ if and only if $2\beta<T$. This reasoning can be repeated inductively so to get that, for a given $k\in\NN^\ast$,  
\begin{align*}
    t_{0,k} = k\beta(\*a,\*x_i),
\end{align*}
as long as $k\beta<T$, that is, as long as $k\leq K=\lfloor T\beta^{-1}\rfloor$. The characterization \eqref{eq:spiking_pre} of $\MB{T}_0$ is therefore complete. Moreover, by construction, we immediately see that 
\begin{align*}
    v\in C^\infty\big((0,T)\setminus\MB{T}_0\big).
\end{align*}
\end{proof}

\subsection{Hidden layers}\label{subsec:hidden_wp} 

We analyze here the well-posedness of the hidden layers' dynamical systems \eqref{eq:model_hidd}, and provide a characterization of the spiking times. 

\begin{proposition}\label{prop:wp_hidden}
For all $\ell\in\inter{L}$, $p\in\inter{P}$, $0<\tau_{\ell,p},\theta_{\ell,p}\in\RR$ and $T>0$ there exists a unique solution $\xi_{\ell,p}$ of the dynamical system \eqref{eq:model_hidd} with $J_\ell$ as in \eqref{eq:input_J} such that 
\begin{align*}
    \xi_{\ell,p}\in C^\infty\big((0,T)\setminus\MC T_{\ell,p}\big),
\end{align*}
where $\MC T_{\ell,p}$ denotes the sequence of spike times $\{t_{\ell,p,k}\}\subset(0,T)$ at which $\xi_{\ell,p}(t_{\ell,p,k})=\theta_{\ell,p}$. Moreover, $\MC T_{\ell,p}$ can be characterized as follows:
\begin{itemize}
    \item[1.] if $\MB T_{\ell-1}=\cup_{p=1}^P \MC T_{\ell-1,p}=\varnothing$, then $\MC T_{\ell,p}=\varnothing$;
    \item[2.] if $\MB T_{\ell-1}\neq\varnothing$, then every spike time $t_{\ell,p,k}\in\MC T_{\ell,p}$ satisfies the necessary condition
    \begin{align}\label{eq:spiking_hidd}
        \omega_\ell J_\ell(t_{\ell,p,k})=\theta_{\ell,p}.
    \end{align}
\end{itemize}
\end{proposition}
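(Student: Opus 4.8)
The plan is to read \eqref{eq:model_hidd} as a linear (affine) ODE driven by a smooth, bounded forcing term and punctuated by threshold-triggered resets, and then to build the global solution by gluing together the smooth pieces between consecutive spikes. The first step is to control the forcing. By Proposition \ref{prop:wp_input} for the input neuron and, inductively, by the present proposition applied to the preceding layer, the spike set $\MB{T}_{\ell-1}$ is finite; hence $J_\ell$ from \eqref{eq:input_J} is a finite sum of Gaussians $\MC{G}_{\mu,t^\ast}$, so $J_\ell\in C^\infty(\RR)$ and is bounded on $[0,T]$. Consequently the right-hand side of \eqref{eq:model_hidd} is smooth in $t$ and affine (hence globally Lipschitz) in $\xi_{\ell,p}$, which is exactly the regularity needed to invoke Cauchy--Lipschitz on each inter-spike interval.

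Next I would construct the solution iteratively. Starting from $\xi_{\ell,p}(0)=0$, Cauchy--Lipschitz gives a unique solution on an initial interval, and it is $C^\infty$ there because the forcing is smooth; explicitly, variation of constants yields $\xi_{\ell,p}(t)=\tfrac{\omega_{\ell,p}}{\tau_{\ell,p}}\int_0^t e^{-(t-s)/\tau_{\ell,p}}J_\ell(s)\,ds$. I define $t_{\ell,p,1}$ as the first time (if any in $(0,T)$) at which this reaches $\theta_{\ell,p}$, apply the reset $\xi^+=0$, and restart the identical construction from $t_{\ell,p,1}$. Iterating produces the smooth pieces and the spike set $\MC{T}_{\ell,p}$, and uniqueness follows from uniqueness on each piece together with the deterministic reset rule.

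The hard part, and the step that actually makes the gluing well-defined, is ruling out accumulation of spike times (Zeno behaviour). Here I would exploit that after each reset $\xi_{\ell,p}$ leaves $0$ with a uniformly bounded drift: from the ODE, $|\dot\xi_{\ell,p}|\le \tfrac{1}{\tau_{\ell,p}}\big(|\xi_{\ell,p}|+|\omega_{\ell,p}|\,\|J_\ell\|_{L^\infty(0,T)}\big)$, so the potential cannot climb from $0$ to the strictly positive level $\theta_{\ell,p}$ in arbitrarily short time. A Grönwall-type estimate then gives a positive lower bound on the inter-spike gaps, so $\MC{T}_{\ell,p}$ is finite, the construction terminates after finitely many steps, and $\xi_{\ell,p}\in C^\infty\big((0,T)\setminus\MC{T}_{\ell,p}\big)$ as claimed. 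This no-Zeno estimate is the only genuinely delicate point; everything else is bookkeeping on top of it.

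Finally I would establish the characterization. If $\MB{T}_{\ell-1}=\varnothing$, the sum defining $J_\ell$ is empty, so $J_\ell\equiv 0$; the homogeneous equation with $\xi_{\ell,p}(0)=0$ then forces $\xi_{\ell,p}\equiv 0<\theta_{\ell,p}$, and no spike occurs, giving $\MC{T}_{\ell,p}=\varnothing$. If $\MB{T}_{\ell-1}\neq\varnothing$, I evaluate the governing ODE at a spike time $t_{\ell,p,k}$: since $\xi_{\ell,p}$ was reset to $0$ at the previous spike and attains the capped value $\theta_{\ell,p}$ at $t_{\ell,p,k}$ while staying strictly below it before, $\theta_{\ell,p}$ is the inter-spike maximum, so the smooth drift must balance at the crossing, $\dot\xi_{\ell,p}(t_{\ell,p,k}^-)=\tfrac{1}{\tau_{\ell,p}}\big(-\theta_{\ell,p}+\omega_{\ell,p}J_\ell(t_{\ell,p,k})\big)=0$. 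This is exactly the stated relation $\omega_{\ell,p}J_\ell(t_{\ell,p,k})=\theta_{\ell,p}$. I would emphasize that it is only necessary, not sufficient: it confines admissible spike times to the level set $\{\,t:\omega_{\ell,p}J_\ell(t)=\theta_{\ell,p}\,\}$, but whether $\theta_{\ell,p}$ is genuinely attained still depends on the accumulated history encoded in the variation-of-constants integral.
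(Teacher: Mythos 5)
Your construction of the solution matches the paper's: you glue together the variation-of-constants solutions on inter-spike intervals, obtain smoothness from $J_\ell\in C^\infty$, and dispose of the case $\MB T_{\ell-1}=\varnothing$ by noting $J_\ell\equiv 0$ forces $\xi_{\ell,p}\equiv 0$. Your explicit no-Zeno argument (a uniform bound on $|\dot\xi_{\ell,p}|$ below threshold yielding a positive lower bound on inter-spike gaps, hence finiteness of $\MC T_{\ell,p}$) is a genuine addition that the paper leaves implicit, and it is correct as stated.

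However, your derivation of the necessary condition \eqref{eq:spiking_hidd} contains a genuine gap. You argue that since $\xi_{\ell,p}<\theta_{\ell,p}$ on $(t_{\ell,p,k-1},t_{\ell,p,k})$ and $\xi_{\ell,p}(t_{\ell,p,k})=\theta_{\ell,p}$, the threshold value is "the inter-spike maximum, so the smooth drift must balance," i.e. $\dot\xi_{\ell,p}(t_{\ell,p,k}^-)=0$. But the maximum here is attained at the \emph{right endpoint} of the interval, not at an interior point, and a boundary maximum only forces the one-sided derivative to be nonnegative. Your argument therefore yields
\begin{align*}
    \dot\xi_{\ell,p}(t_{\ell,p,k}^-)=\frac{1}{\tau_{\ell,p}}\Big(-\theta_{\ell,p}+\omega_{\ell,p}J_\ell(t_{\ell,p,k})\Big)\geq 0,
\end{align*}
i.e. the inequality $\omega_{\ell,p}J_\ell(t_{\ell,p,k})\geq\theta_{\ell,p}$, not the equality claimed in \eqref{eq:spiking_hidd}. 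Indeed, a first upward crossing of the threshold generically occurs with strictly positive slope (this transversality is precisely what the paper assumes later, in Propositions \ref{prop:relaxation_input}--\ref{prop:spike_times_convergence}), which is incompatible with $\dot\xi_{\ell,p}(t_{\ell,p,k}^-)=0$. The paper instead obtains \eqref{eq:spiking_hidd} by writing the threshold condition as the integral identity $\int_{t_{\ell,p,k-1}}^{t_{\ell,p,k}} e^{s/\tau_{\ell,p}}J_\ell(s)\,ds = \tfrac{\theta_{\ell,p}\tau_{\ell,p}}{\omega_{\ell,p}}e^{t_{\ell,p,k}/\tau_{\ell,p}}$ and differentiating it with respect to $t_{\ell,p,k}$; whatever one thinks of that step, it is a different mechanism from yours, and your maximum-principle substitute does not deliver the equality. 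You would need either to adopt the paper's differentiation argument or to weaken your conclusion to the inequality your reasoning actually supports.
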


\begin{proof}
First of all, if $\MB T_{\ell-1}=\varnothing$, we have from \eqref{eq:input_J} that $J_\ell(t)\equiv 0$. This clearly implies that $\xi_{\ell,p}(t)\equiv 0$.

Let us now assume that $\MB T_{\ell-1}\neq\varnothing$. In the same spirit of the proof of Proposition \ref{prop:wp_input}, notice that the solution to \eqref{eq:model_hidd} can be computed explicitly, by gluing together the solutions of 
\begin{align}\label{eq:model_hidd_split}
    \begin{cases}
        \displaystyle\dot{\xi}_{\ell,p}(t) = \frac{1}{\tau_{\ell,p}} \Big(-\xi_{\ell,p}(t) + \omega_\ell J_\ell(t)\Big), & \displaystyle t\in (t_{\ell,p,k-1},t_{\ell,p,k})
        \\
        \xi_{\ell,p}(t_{\ell,p,k-1}) = 0 	
    \end{cases}
\end{align}
and the reset mechanism
\begin{align*}
    \xi_{\ell,p,k}^- = \lim_{t\to t_{\ell,p,k}^-} \xi_{\ell,p}(t) = \theta_{\ell,p}, \quad\quad \xi_{\ell,p,k}^+ = \lim_{t\to t_{\ell,p,k}^+} \xi_{\ell,p}(t) = 0, 
\end{align*}
where we have set $t_{\ell,p,0}=0$. Thanks to the variation of constants formula, the solution of \eqref{eq:model_hidd_split} is given by 
\begin{align}\label{eq:xi_solution_split}
    \xi_{\ell,p}(t) = \frac{\omega_{\ell,p}}{\tau_{\ell,p}}e^{-\frac{t}{\tau_{\ell,p}}}\int_{t_{\ell,p,k-1}}^t e^{\frac{s}{\tau_{\ell,p}}}J_\ell(s)\,ds.
\end{align}

Moreover, since by construction $J_\ell\in C^\infty(0,T)$, we have that $\xi_{\ell,p}\in C^\infty\big(t_{\ell,p,k-1},t_{\ell,p,k})$. Finally, the spike times can be computed by imposing in \eqref{eq:xi_solution_split} that $\xi_{\ell,p}(t_{\ell,p,k})=\theta_{\ell,p}$, that is
\begin{align*}
    \int_{t_{\ell,p,k-1}}^{t_{\ell,p,k}} e^{\frac{s}{\tau_{\ell,p}}}J_\ell(s)\,ds = \frac{\theta_{\ell,p} \tau_{\ell,p}}{\omega_{\ell,p}} e^{\frac{t_{\ell,p,k}}{\tau_{\ell,p}}}.
\end{align*}
Differentiating this last expression with respect to $t_{\ell,p,k}$ we get the identity 
\begin{align*}
    e^{\frac{t_{\ell,p,k}}{\tau_{\ell,p}}}J_\ell(t_{\ell,p,k}) = \frac{\theta_{\ell,p}}{\omega_{\ell,p}} e^{\frac{t_{\ell,p,k}}{\tau_{\ell,p}}}.
\end{align*}

We then see immediately that, if $\xi_{\ell,p}(t_{\ell,p,k})=\theta_{\ell,p}$, we also have $\omega_\ell J_\ell(t_{\ell,p,k})=\theta_{\ell,p}$. Our proof is therefore completed.
\end{proof}

\begin{remark}
Notice that \eqref{eq:spiking_hidd} provides a set of \textbf{candidate times} for spiking. However, not every solution of \eqref{eq:spiking_hidd} corresponds to an actual spike: a candidate time becomes a true spike time only if the membrane potential $\xi_{\ell,p}(t)$, evolving according to \eqref{eq:model_hidd} and subject to reset, actually reaches the threshold $\theta_{\ell,p}$ at that instant. Thus, \eqref{eq:spiking_hidd} is a necessary (but not sufficient) condition characterizing all spike times, in the sense that each spike time belongs to the solution set of \eqref{eq:spiking_hidd} but the converse inclusion need not hold.
\end{remark}

\subsection{Analysis of the spike times' trend}\label{subsec:spike_times_trend} 

We analyze here the spike times' trend among the hidden layers $\{\lhid \ell\}_{\ell=1}^L$. In particular, we are interested in determining under which conditions the number of spike times increases or decreases from one layer to the successive one.

\begin{proposition}\label{prop:spike_times_far}
Fix $\ell\in\inter{L}$, $p\in\inter{P}$, and let $\xi_{\ell,p}(t)$ be the membrane potential of a hidden-layer neuron governed by the hybrid system \eqref{eq:model_hidd}, with input spike times $\MB T_{\ell-1} = \{t_{\ell-1,k}\}_{k=1}^K\subset (0,T)$ and input current $J_\ell(t)$ given by \eqref{eq:input_J}. Given $\mu>0$, suppose there exists $\Delta> 6\mu$ such that the presynaptic spike times $\MB T_{\ell-1}$ satisfy 
\begin{align}\label{eq:separation}
    t_{\ell-1,k+1} - t_{\ell-1,k} \geq \Delta, \quad\text{ for all } k\in\inter{K-1}.    
\end{align}
Assume further that the parameter $\omega_{\ell,p}$ and the threshold $\theta_{\ell,p}$ are such that 
\begin{align}\label{eq:amplitude}
    \omega_{\ell,p} \MC{G}_{\mu,t_{\ell-1,k}} > \theta_{\ell,p}, \quad\text{ for all } k\in\inter{K}.    
\end{align}
Then, it holds the following
\begin{itemize}
    \item[1.] For each presynaptic spike $t_{\ell-1,k}$ there exists at least one postsynaptic spike $t_{\ell,p,j}\in \MC T_{\ell,p}$ satisfying 
    \begin{align}\label{eq:spike_pattern}
        t_{\ell,p,j} \in (t_{\ell-1,k} - \zeta, t_{\ell-1,k}) \quad\text{ with }\quad \zeta\coloneqq \mu\sqrt{2\log\left(\frac{\omega_\ell}{\mu\sqrt{2\pi}\theta_\ell}\right)}.    
    \end{align}
    \item[2.] Every spike generated by the Gaussian $\MC{G}_{\mu,t_{\ell-1,k}}$ lies in the interval $(t_{\ell-1,k} - \zeta, t_{\ell-1,k}+\zeta)$.    
    \item[3.] For each $k\in\inter{K}$, denote by $N_{\max}(k)$ the maximal number of spikes in $\MC T_{\ell,p}$ generated around the Gaussian $\MC G_{\mu,t_{\ell-1,k}}$. Then
    \begin{align*}
        N_{\max}(k) \leq \left\lfloor 1 + \log_2\left(\frac{\omega_{\ell,p}}{\tau_{\ell,p}\theta_{\ell,p}} \right) \right\rfloor,      
    \end{align*}
    and the total number of spike times in $\MC T_{\ell,p}$ satisfies
    \begin{align}\label{eq:spikes_count}
        K \leq |\MC T_{\ell,p}| \leq K\left\lfloor 1 + \log_2\left(\frac{\omega_{\ell,p}}{\tau_{\ell,p}\theta_{\ell,p}} \right) \right\rfloor.
    \end{align}
    In particular, if $\omega_{\ell,p}<2\tau_{\ell,p}\theta_{\ell,p}$, we have $|\MC T_{\ell,p}| = K$.
\end{itemize}
\end{proposition}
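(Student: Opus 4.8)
The plan is to exploit the separation hypothesis \eqref{eq:separation} to \emph{decouple} the analysis Gaussian-by-Gaussian, reducing everything to the LIF dynamics \eqref{eq:model_hidd} driven by a single pulse, and then to assemble the three assertions. First I would record a quantitative decoupling estimate: since consecutive centers lie at distance $\ge \Delta > 6\mu$, for $t$ in a $3\mu$-neighborhood of a fixed center $t_{\ell-1,k}$ the input current \eqref{eq:input_J} satisfies $J_\ell(t) = \MC G_{\mu,t_{\ell-1,k}}(t) + r_k(t)$, where the remainder $r_k$ collects the tails of all other Gaussians and obeys $\|r_k\|_\infty \lesssim \tfrac{1}{\mu\sqrt{2\pi}}e^{-\Delta^2/(2\mu^2)}$, which is negligible because $\Delta>6\mu$. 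With this in hand, near each center the forcing $\omega_{\ell,p}J_\ell$ agrees, up to an exponentially small error, with the single scaled Gaussian $\omega_{\ell,p}\MC G_{\mu,t_{\ell-1,k}}$, whose super-threshold set $\{\,\omega_{\ell,p}\MC G_{\mu,t_{\ell-1,k}}(t)>\theta_{\ell,p}\,\}$ is exactly the symmetric interval $(t_{\ell-1,k}-\zeta,t_{\ell-1,k}+\zeta)$ with $\zeta$ as in \eqref{eq:spike_pattern}, obtained by solving $\omega_{\ell,p}\MC G_{\mu,t_{\ell-1,k}}(t)=\theta_{\ell,p}$.

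Assertion (2) is then immediate: by the necessary spiking condition of Proposition \ref{prop:wp_hidden}, every $t_{\ell,p,j}\in\MC T_{\ell,p}$ satisfies $\omega_{\ell,p}J_\ell(t_{\ell,p,j})=\theta_{\ell,p}$, and since \eqref{eq:separation} keeps the intervals $(t_{\ell-1,k}-\zeta,t_{\ell-1,k}+\zeta)$ pairwise disjoint and the cross-tails $r_k$ below the margin, this equality can hold only inside one such interval; hence each spike is attached to a unique center and lies within $\zeta$ of it. For assertion (1) I would argue by the intermediate value theorem on the accumulated potential. Fix $k$, let $t_0\le t_{\ell-1,k}-\zeta$ be the last reset (or $0$), and note that on the preceding gap the forcing is sub-threshold, so $\xi_{\ell,p}$ stays below $\theta_{\ell,p}$ there. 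Assuming no spike has yet occurred in $(t_{\ell-1,k}-\zeta,t_{\ell-1,k})$, the variation-of-constants representation \eqref{eq:xi_solution_split} gives a lower bound for $\xi_{\ell,p}(t_{\ell-1,k})$ in terms of $\tfrac{\omega_{\ell,p}}{\tau_{\ell,p}}\int \MC G_{\mu,t_{\ell-1,k}}$ over the rising half of the pulse; the amplitude condition \eqref{eq:amplitude} is what forces this bound above $\theta_{\ell,p}$. Since $\xi_{\ell,p}$ is continuous and started below threshold, it must cross $\theta_{\ell,p}$ at some time in $(t_{\ell-1,k}-\zeta,t_{\ell-1,k})$, the claimed spike strictly before the center.

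For assertion (3) I would first control the number of spikes inside a single pulse by an energy balance. Integrating $\tau_{\ell,p}\dot\xi_{\ell,p}=-\xi_{\ell,p}+\omega_{\ell,p}J_\ell$ over one inter-spike interval $(s_j,s_{j+1})$, where $\xi_{\ell,p}$ runs from $0$ to $\theta_{\ell,p}$, and using $\xi_{\ell,p}\ge 0$, yields $\omega_{\ell,p}\int_{s_j}^{s_{j+1}}J_\ell\,dt\ge \tau_{\ell,p}\theta_{\ell,p}$; summing over all spikes attached to a center and using $\int \MC G_{\mu,t_{\ell-1,k}}=1$ bounds their number. This robust accounting already delivers the lower inequality in \eqref{eq:spikes_count} (via assertion (1), which deposits at least one spike per center, together with the disjointness of the first step) and, in the regime $\omega_{\ell,p}<2\tau_{\ell,p}\theta_{\ell,p}$, shows that no second spike can form, so $|\MC T_{\ell,p}|=K$. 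To reach the sharper logarithmic $N_{\max}(k)$, I would refine this into a doubling estimate: show that the drive available to trigger the $(j+1)$-th spike after a reset is at most half of that available for the $j$-th, so that producing $N$ spikes requires $\omega_{\ell,p}/\tau_{\ell,p}>2^{\,N-1}\theta_{\ell,p}$, which rearranges to the bound $N_{\max}(k)\le \lfloor 1+\log_2(\omega_{\ell,p}/(\tau_{\ell,p}\theta_{\ell,p}))\rfloor$.

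The hard part will be the quantitative half-life estimate underlying the $\log_2$ bound of assertion (3): establishing that each successive reset genuinely halves the usable drive requires careful tracking of how the post-reset charging competes with the leak over the narrow, rapidly varying Gaussian profile, and it is here that the precise constant $2$ — and hence the threshold $\omega_{\ell,p}=2\tau_{\ell,p}\theta_{\ell,p}$ — must be earned rather than assumed; the plain energy balance only yields the weaker linear count $\omega_{\ell,p}/(\tau_{\ell,p}\theta_{\ell,p})$. A secondary difficulty is making assertion (1) quantitatively consistent with \eqref{eq:amplitude} as stated, since the crude lower bound on $\xi_{\ell,p}(t_{\ell-1,k})$ loses a factor to the leak, so the margin by which $\omega_{\ell,p}\MC G_{\mu,t_{\ell-1,k}}$ exceeds $\theta_{\ell,p}$ (or the relative sizes of $\mu$ and $\tau_{\ell,p}$) must be propagated to guarantee an actual crossing. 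Finally, all estimates must be made uniform in $k$, which is precisely where the exponential smallness of the remainder $r_k$ furnished by \eqref{eq:separation} is repeatedly invoked.
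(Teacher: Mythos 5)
Your plan follows essentially the same architecture as the paper's proof: decouple the bumps via the separation condition \eqref{eq:separation}, obtain the symmetric super-threshold window $(t_{\ell-1,k}-\zeta,\,t_{\ell-1,k}+\zeta)$ by solving $\omega_{\ell,p}\MC G_{\mu,t_{\ell-1,k}}(t)=\theta_{\ell,p}$, get the first crossing on the rising flank by continuity, and bound the number of subsequent spikes by a halving-of-residual-charge induction. Two of your ingredients, however, genuinely differ and are worth noting. First, you derive assertion (2) from the necessary spiking condition \eqref{eq:spiking_hidd} of Proposition \ref{prop:wp_hidden}, which pins every spike inside a super-threshold window; the paper does not really argue assertion (2) separately, so your route is cleaner there. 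Second, your energy balance \textendash\, integrating $\tau_{\ell,p}\dot\xi_{\ell,p}=-\xi_{\ell,p}+\omega_{\ell,p}J_\ell$ over each inter-spike interval to get $\omega_{\ell,p}\int J_\ell\,dt\ge\tau_{\ell,p}\theta_{\ell,p}$ per spike, hence $N\le\omega_{\ell,p}/(\tau_{\ell,p}\theta_{\ell,p})$ via the unit mass of the Gaussian \textendash\, is a rigorous argument the paper does not make, and it already delivers the most important consequence ($\omega_{\ell,p}<2\tau_{\ell,p}\theta_{\ell,p}\Rightarrow$ exactly one spike per bump, hence $|\MC T_{\ell,p}|=K$) without any halving heuristic. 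Finally, the step you single out as the hard part \textendash\, earning the constant $2$ in the doubling estimate so that the linear count sharpens to $\lfloor 1+\log_2(\omega_{\ell,p}/(\tau_{\ell,p}\theta_{\ell,p}))\rfloor$ \textendash\, is precisely where the paper's own proof is only heuristic: it asserts that ``at most half of the remaining bump area has been consumed'' after each reset without justification, so you have correctly located the genuine gap shared by both arguments rather than introduced a new one. The same goes for your observation that the crude lower bound on $\xi_{\ell,p}$ loses a leak factor relative to \eqref{eq:amplitude}; the paper's appeal to ``continuity'' at that point quietly glosses over the same issue.
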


\begin{proof}

First of all, because of the separation condition \eqref{eq:separation}, we can isolate and analyze separately the dynamics around each spike time $t_{\ell-1,k}\in\MB{T}_{\ell-1}$. Indeed, for all $k\in\inter{K}$, we have that 
\begin{align*}
    J_\ell(t) \sim \MC{G}_{\mu,t_{\ell-1,k}}(t),
\end{align*}
since $\MC{G}_{\mu,t_{\ell-1,k}}$ is negligible in the interval $[t_{\ell-1,j} - 3\mu, t_{\ell-1,j} + 3\mu]$ with $j\neq k$. 

Moreover, the amplitude condition \eqref{eq:amplitude} ensures that the equation $\omega_{\ell,p} \MC G_{\mu,t_{\ell-1,k}}(t) = \theta_{\ell,p}$ has two solutions
\begin{align*}
    t_{\ell,p,k}^- = t_{\ell-1,k} - \zeta \quad\text{ and }\quad t_{\ell,p,k}^+ = t_{\ell-1,k} + \zeta,     
\end{align*}
with $\zeta$ as in \eqref{eq:spike_pattern}. By continuity, the ODE solution $\xi_{\ell,p}(t)$ must reach the threshold for some 
\begin{align*}
    t_{\ell,k,1}\in (t_{\ell-1,k} - \zeta, t_{\ell-1,k}).     
\end{align*}

Now, because $t_{\ell,k,1}$ occurs on the rising side of the Gaussian $\MC{G}_{\mu,t_{\ell-1,k}}(t)$, at least half of the Gaussian’s total area remains to the right of $t_{\ell,k,1}$. Hence the remaining effective charge after the first reset is at most
\begin{align*}
    Q_{k,1} \leq \frac 12 \frac{\omega_{\ell,p}}{\tau_{\ell,p}}\int_\RR \MC G_{\mu,t_{\ell-1,k}}(t)\,dt = \frac{\omega_{\ell,p}}{2\tau_{\ell,p}}.
\end{align*}

A second spike requires that this residual charge can raise again $\xi_{\ell,p}$ from $0$ to $\theta_{\ell,p}$. In particular, we need 
\begin{align*}
    \frac{\omega_{\ell,p}}{2\tau_{\ell,p}}\geq\theta_{\ell,p}
\end{align*}
or, equivalently, $\omega_{\ell,p}\geq 2\tau_{\ell,p} \theta_{\ell,p}$.   

Suppose this happens and a second spike occurs at $t_{\ell,k,2} \in (t_{\ell-1,k}, t_{\ell-1,k}+\zeta)$. By then, at most half of the remaining bump area has been consumed. So the residual charge after the second reset satisfies
\begin{align*}
    Q_{k,2} \leq \frac{\omega_{\ell,p}}{4\tau_{\ell,p}}.
\end{align*}

Thus a third spike requires $\omega_{\ell,p}\geq 4\tau_{\ell,p} \theta_{\ell,p}$.  In general, after $n$ spikes, the remaining charge is bounded by
\begin{align*}
    Q_{k,n} \leq \frac{\omega_{\ell,p}}{2^n\tau_{\ell,p}}, 
\end{align*}
and an $(n+1)$-th spike requires $\omega_{\ell,p}\geq 2^n \tau_{\ell,p} \theta_{\ell,p}$. Thus the maximal number $N_\text{max}(k)$ of spikes in $\MC T_{\ell,p}$ around $t_{\ell-1,k}\in\MB{T}_{\ell-1}$ satisfies
\begin{align*}
    \frac{\omega_{\ell,p}}{\tau_{\ell,p} \theta_{\ell,p}}\geq 2^{N_\text{max}(k)},   
\end{align*}
i.e. 
\begin{align*}
    N_\text{max} \leq \left\lfloor 1 + \log_2\left(\frac{\omega_{\ell,p}}{\tau_{\ell,p}\theta_{\ell,p}}\right) \right\rfloor, \quad\text{ for all } k\in\inter{K}.    
\end{align*}

Finally, since the Gaussian bumps are independent (by separation hypothesis \eqref{eq:separation}), we can sum over $k$ the contributions to obtain the final estimate \eqref{eq:spikes_count} for the number of spikes $|\MC T_{\ell,p}|$.
\end{proof}

Figure \ref{fig:spike_times_case1} illustrates Proposition \ref{prop:spike_times_far} in the case $P=1$. We fix $\theta_\ell = 0.2$, choose $\mu = 0.8$, and prescribe presynaptic spike times $\MB T_{\ell-1} = \{20, 35, 50\}$, which satisfy the separation condition \eqref{eq:separation}. In the top panel, we set $\tau_\ell = 5$ and $\omega_\ell = 1.5$, so that $\omega_\ell / (\tau_\ell \theta_\ell) = 1.5 < 2$. In this case, the postsynaptic neuron generates exactly three spikes, located at $\mathbb{T}_\ell = \{19.5, 34.5, 49.5\}$. In the bottom panel, instead, we take $\tau_\ell = 5$ and $\omega_\ell = 3$, yielding $\omega_\ell / (\tau_\ell \theta_\ell) = 3 > 2$. Here, the same presynaptic input produces six postsynaptic spikes, namely $\mathbb{T}_\ell = \{19.6, 20.5, 34.7, 35.5, 49.7, 50.5\}$.

 \begin{figure}[!ht]
     \centering     
     \includegraphics[scale=0.95]{./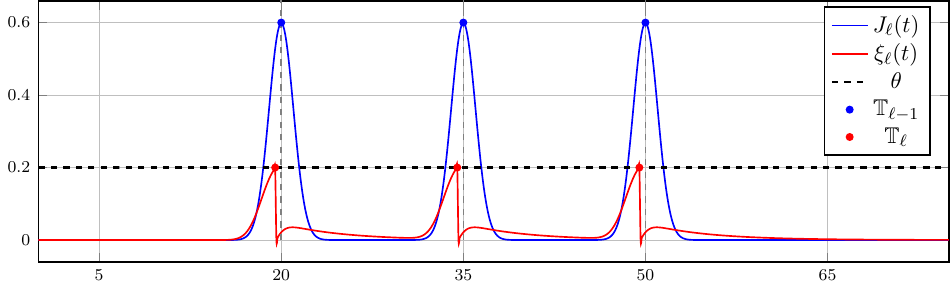}
	 \includegraphics[scale=0.95]{./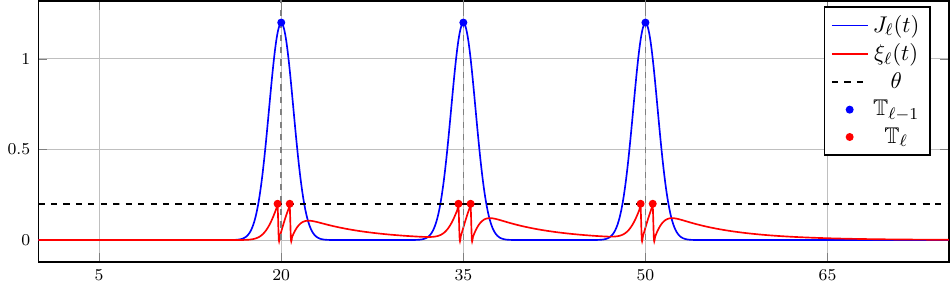}     
     \caption{Graphical illustration of Proposition \ref{prop:spike_times_far}. The plots show how the number of postsynaptic spikes depends on the ratio $\omega_{\ell,p}/(\tau_{\ell,p} \theta_{\ell,p})$. In the top panel, when this ratio is below the critical threshold $2$, each presynaptic spike produces exactly one postsynaptic spike. In the bottom panel, when the ratio exceeds $2$, each presynaptic spike gives rise to two postsynaptic spikes. This demonstrates how the parameters $\tau_{\ell,p}$, $\omega_{\ell,p}$, and $\theta_{\ell,p}$ govern the propagation and multiplication of spike times across layers.}\label{fig:spike_times_case1}
 \end{figure}

\begin{proposition}[Spike behavior under overlapping input bumps]\label{prop:spike_times_close}
Fix $\ell\in\inter{L}$, $p\in\inter{P}$, and let $\xi_{\ell,p}(t)$ be the membrane potential of a hidden-layer neuron governed by the hybrid system \eqref{eq:model_hidd}, with input current $J_\ell(t)$ given by \eqref{eq:input_J} and input spike times $\MB T_{\ell-1} = \{t_{\ell-1,k}\}_{k=1}^K \subset (0,T)$ that may be arbitrarily close. Let
\begin{align*}
    \MC M\coloneqq \Big\{t^*\in(0,T)\,:\, J_\ell'(t^*)=0\text{ and } J_\ell''(t^*)<0\Big\}    
\end{align*}
be the set of strict local maxima of $J_\ell$ and suppose that
\begin{align*}
   \omega_{\ell,p} \max_{t\in\mathcal{M}} J_\ell(t) > \theta_{\ell,p}. 
\end{align*}
Then, it holds the following
\begin{itemize}
    \item[1.] Each maximum $t^*\in\MC M$ generates at least one spike in a neighborhood of $t^*$. In particular, $|\MC T_{\ell,p}|\geq |\MC M|$.
    \item[2.] For each maximum $t^*\in\MC M$, the number of spikes it can generate is bounded by
    \begin{align*}
        N_{\max}(t^*) \leq\left\lfloor 1+\log_2\left(\frac{\omega_{\ell,p}}{\tau_{\ell,p}\theta_{\ell,p}}\right)\right\rfloor.     
    \end{align*}
    Hence
    \begin{align}\label{eq:spikes_count_close}
        |\MC T_{\ell,p}| \leq |\MC M|\left\lfloor 1+\log_2\left(\frac{\omega_{\ell,p}}{\tau_{\ell,p}\theta_{\ell,p}}\right)\right\rfloor.     
    \end{align}
    In particular, if $\omega_{\ell,p}<2 \tau_{\ell,p}\theta_{\ell,p}$, we have $|\MC T_{\ell,p}| = |\MC M|$.
\end{itemize}
\end{proposition}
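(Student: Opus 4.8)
The plan is to transpose the argument of Proposition \ref{prop:spike_times_far} from the regime of well-separated bumps to the overlapping one, the only structural change being that we can no longer isolate the individual Gaussians in $J_\ell$. Instead, I localize the analysis around each strict local maximum $t^*\in\MC M$ of the smooth current $J_\ell$. The backbone is again the necessary spiking condition $\omega_{\ell,p}J_\ell(t_{\ell,p,k})=\theta_{\ell,p}$ from Proposition \ref{prop:wp_hidden}, which confines every spike to a threshold-crossing of the scaled current $t\mapsto\omega_{\ell,p}J_\ell(t)$. Combined with the hypothesis $\omega_{\ell,p}\max_{t\in\MC M}J_\ell(t)>\theta_{\ell,p}$, this forces such crossings to bracket the super-threshold maxima, so that the whole problem reduces to counting, peak by peak, how many times $\xi_{\ell,p}$ returns to $\theta_{\ell,p}$.

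For the lower bound (item 1), I would fix a maximum $t^*$ at which $\omega_{\ell,p}J_\ell(t^*)>\theta_{\ell,p}$ and consider the maximal open interval $I=(t^-,t^+)\ni t^*$ on which $\omega_{\ell,p}J_\ell>\theta_{\ell,p}$. On $I$ the potential obeys $\dot\xi_{\ell,p}=\tau_{\ell,p}^{-1}(\omega_{\ell,p}J_\ell-\xi_{\ell,p})$ and is therefore strictly increasing whenever $\xi_{\ell,p}<\theta_{\ell,p}$; since $\xi_{\ell,p}$ enters $I$ from a value below threshold (the initial datum $0$ or a post-reset $0$), a continuity and intermediate-value argument shows it must attain $\theta_{\ell,p}$ somewhere in $I$, producing at least one spike attached to $t^*$. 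Because distinct elements of $\MC M$ are separated by a local minimum of $J_\ell$, the spikes obtained for different maxima are genuinely distinct, whence $|\MC T_{\ell,p}|\geq|\MC M|$. (I note in passing that the mechanism delivers a spike at every maximum where the current is super-threshold; the stated hypothesis guarantees this at least at the dominant peak, and the clean count $|\MC M|$ presumes it at each maximum.)

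For the upper bound (item 2), I would replay the charge-halving mechanism of Proposition \ref{prop:spike_times_far} locally. Restricting attention to the portion of $J_\ell$ attached to a single peak $t^*$ (delimited by the two neighboring minima), the first spike occurs on the rising flank, so after the reset at most half of the residual driving charge survives; iterating, the charge available after $n$ spikes is bounded by $\omega_{\ell,p}/(2^n\tau_{\ell,p})$, and an $(n+1)$-th spike requires $\omega_{\ell,p}/(2^n\tau_{\ell,p})\geq\theta_{\ell,p}$. This yields the per-maximum bound $N_{\max}(t^*)\leq\lfloor 1+\log_2(\omega_{\ell,p}/(\tau_{\ell,p}\theta_{\ell,p}))\rfloor$, and summing over $\MC M$ gives \eqref{eq:spikes_count_close}. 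The special case $\omega_{\ell,p}<2\tau_{\ell,p}\theta_{\ell,p}$ forces the floor to equal $1$, which together with item 1 pins down $|\MC T_{\ell,p}|=|\MC M|$.

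The hard part will be the lower bound when the peaks are so close that the intervening minimum of $J_\ell$ does not dip below $\theta_{\ell,p}/\omega_{\ell,p}$: then the super-threshold regions of neighbouring maxima merge into a single interval, the two ``basins'' overlap, and one must argue that $\xi_{\ell,p}$ still returns to threshold once per peak rather than firing once and coasting through. I expect to control this by coupling the strict sign change of $J_\ell'$ at each $t^*$ (a genuine local increase of the forcing) with the reset dynamics, and to make the charge bookkeeping of item 2 robust to the fact that, in the overlapping regime, a single merged interval may feed several maxima at once, so the local area attributed to one peak must be defined carefully to preserve the uniform per-maximum bound.
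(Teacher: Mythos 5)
Your proposal follows essentially the same route as the paper's proof: localization of the dynamics around each strict local maximum of $J_\ell$ combined with a continuity/intermediate-value argument for the lower bound, and the same charge-halving bookkeeping as in Proposition \ref{prop:spike_times_far} for the per-maximum upper bound. The caveats you flag --- that the stated hypothesis only controls the dominant peak of $J_\ell$ over $\MC M$ rather than every maximum, and that merged super-threshold basins make the per-peak attribution of charge delicate --- are genuine, and the paper's own proof passes over both silently, so your version is if anything more candid about where the argument remains informal.
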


\begin{proof}

Let $t^*\in\MC M$ be a strict local maximum of $J_\ell(t)$. Since $\xi_{\ell,p}$ is continuous and initially small, and $\omega_{\ell,p} J_\ell(t^*)>\theta_{\ell,p}$ by assumption, the ODE ensures that in a neighborhood of $t^*$, $\xi_{\ell,p}$ grows above the threshold. Thus there exists a first threshold crossing time $t_{\ell,p,1}$ near $t^*$. Therefore, each $t^*\in\MC M$ yields at least one spike and we have $|\MC T_{\ell,p}| \geq |\MC M|$.

To obtain the upper bound \eqref{eq:spikes_count_close}, we adopt the same argument as in the proof of Proposition \ref{prop:spike_times_far}. At each $t^*\in\MC M$, $J_\ell(t)$ behaves like a wide unimodal bump (a sum of overlapping Gaussians). The total effective charge available from this bump is
\begin{align*}
    Q = \frac{\omega_{\ell,p}}{\tau_{\ell,p}}\int_\RR J_\ell(s)\,ds.    
\end{align*}

Now, each spike consumes at least $\theta_{\ell,p}$ units of charge raising $\xi_{\ell,p}$ from $0$ to the threshold $\theta_{\ell,p}$. Because the bump is unimodal, after each threshold crossing, at least half the bump’s residual area lies ahead of the crossing time. Thus the available charge after $n$ spikes is bounded by $Q2^{-n}$. So to produce an $(n+1)$-th spike, we need
$Q2^{-n} \geq \theta_{\ell,p}$, which is equivalent to $\omega_{\ell,p}\geq 2^n \tau_{\ell,p}\theta_{\ell,p}$. Therefore the maximum number of spikes satisfies \eqref{eq:spikes_count_close}. 
\end{proof}

Propositions \ref{prop:spike_times_far} and \ref{prop:spike_times_close} show that, under natural separation or unimodal bump conditions, the number of spikes per layer typically forms a non-increasing sequence. In particular, when presynaptic spike times are sufficiently separated and the ratio
\begin{align*}
    \Gamma_{\ell,p}\coloneqq \frac{\omega_{\ell,p}}{\tau_{\ell,p} \theta_{\ell,p}} < 2,    
\end{align*}
each input bump produces at most one output spike, leading to stability of spike counts across layers.

This quantity $\Gamma_{\ell,p}$ measures the effective input strength per unit time relative to the firing threshold $\theta_{\ell,p}$. Intuitively, if $\Gamma_{\ell,p} < 2$, each bump is too weak to cause more than one firing. On the other hand, if $\Gamma_{\ell,p} \geq 2$, residual charge after a spike may suffice to trigger additional firings. In this sense, we say the neuron operates in a \textbf{high}-\textbf{gain regime} when $\Gamma_{\ell,p} \gg 1$.

Furthermore, the membrane time constant $\tau_{\ell,p}$ controls how quickly the potential leaks away. Here, we can identify two regimes
\begin{itemize}
    \item[1.] \textbf{long memory} (large $\tau_{\ell,p}$) means the neuron retains input longer, making extra spikes more likely;
    \item[2.] \textbf{short memory} (small $\tau_{\ell,p}$) means quick decay, which usually suppresses multiple firings unless bumps overlap.
\end{itemize}

Under these notions, the exceptions to monotonicity in the number of spikes per neuron and layer occur in two main scenarios:
\begin{itemize}
    \item[1.] \textbf{Overlapping presynaptic spikes.} If the intervals between spike times in $\MB T_{\ell-1}$ violate the separation condition \eqref{eq:separation}, Gaussian bumps overlap to form a composite input $J_\ell(t)$ with several local maxima. As Proposition \ref{prop:spike_times_close} indicates, each strict local maximum may induce at least one spike, so that $|\MC T_{\ell,p}| \geq |\MC M|$, where $|\MC M|$ is the number of maxima of $J_\ell$.
    \item[2.] \textbf{High gain with sufficient memory.} If the ratio $\Gamma_{\ell,p}$ satisfies $\Gamma_{\ell,p} \geq 2$, then, as in Proposition \ref{prop:spike_times_far}, a single input bump may yield multiple spikes. Each reset leaves enough residual input charge for another threshold crossing. The maximal number of spikes per bump grows logarithmically with $\Gamma_{\ell,p}$.
\end{itemize}

When both overlap and high gain occur simultaneously, the increase in spike count can be particularly pronounced, as multiple overlapping bumps may each sustain several spikes after resets. Overall, we have the following possibilities
\begin{displaymath}
    \begin{array}{ll}
        |\MC T_{\ell,p}| = |\MB T_{\ell-1}|, & \text{if inputs are separated and } \Gamma_{\ell,p} < 2, 
        \\[6pt]
        |\MC T_{\ell,p}| \leq |\MB T_{\ell-1}|, & \text{if inputs are separated but } \Gamma_{\ell,p} \geq 2, 
        \\[6pt]
        |\MC T_{\ell,p}| \geq |\MC M|, & \text{if overlapping inputs create $|\MC M|$ local maxima}, 
        \\[6pt]
        |\MC T_{\ell,p}| > |\MB T_{\ell-1}|, & \text{if overlapping inputs and } \Gamma_{\ell,p} \geq 2.
    \end{array}
\end{displaymath}

In summary, while the generic trend is non-increasing spike counts across layers, increases may occur under specific conditions: overlapping inputs, sufficiently large gain-to-threshold ratios, and long enough membrane time constants to permit resets between peaks. These situations are exceptional and often unstable, but they provide a precise mechanism for how the number of spike times among layers may grow.

Finally, recall that at the level of the full layer, recall that the set of spike times is given by $\MB T_\ell = \bigcup_{p=1}^P \MC T_{\ell,p}$. Since Proposition \ref{prop:spike_times_far} characterizes the number of spikes neuron-wise, the total number of spikes in the layer is obtained by summing over all $P$ neurons. In particular, the bounds in \eqref{eq:spikes_count} scale linearly with $P$. This observation highlights the role of width: layers with few neurons naturally limit the overall growth of spike counts, whereas wide layers may amplify the total number of spikes, even if each individual neuron remains in a controlled regime. From a biological perspective, this scaling resonates with the principle of metabolic efficiency, since neural systems often rely on sparse populations of neurons to encode information in an energy-efficient way \cite{laughlin2001energy}.

\begin{remark}

According to what we have observed so far, due to the reset dynamics \eqref{eq:model_hidd}, in general each neuron emits at most one spike per sufficiently separated input bump. Hence, if the presynaptic spikes are non-overlapping and well-behaved, the number of output spikes per neuron cannot exceed the number of input spikes, meaning that the sequence of spikes number is non-increasing under non-pathological conditions.

This means that, if we exclude pathological conditions such as resonance effects, overlapping spikes inducing multiple firings per bump, or unusual parameter regimes (tiny thresholds, long memory time constants), then the number $|\MB T_0|$ of spike points in the input layer allows defining an upper bound on the number of spike points in the deeper layers.

Finally, recall that he input layer has a single neuron, governed by the ODE \eqref{eq:model_pre}, and that, according to Proposition \ref{prop:wp_input}, the set of its spike times $\MB T_0 \subset (0,T)$ is 
\begin{itemize}
    \item Empty, if $\langle \*a, \*x \rangle \leq \theta_v$ or if the time horizon $T$ is not large enough so to allow the neuron generate a spike;
    \item A periodically spaced set $\{k \beta\}_{k=1}^K$, otherwise.
\end{itemize}

Hence, the number of spikes in the input layer is determined entirely by the input $\*x$ and by the tuning of the parameter $\*a$ modulating how the input neuron $\nin$ processes $\*x$. 
\end{remark}

\section{Universal approximation}\label{sec:UA}

In this section we investigate the expressive power of the SNN architecture introduced in Section \ref{sec:framework}. Our goal is to show that the class of mappings $\SNN:\RR^d\to\RR$ defined in \eqref{eq:SNN}, obtained by combining the LIF dynamics in the input, hidden, and output layers with the static Cybenko-type readout, is dense in the space of continuous functions on compact subsets of $\RR^d$. This result provides a rigorous justification for the use of the proposed architecture: despite the hybrid dynamical nature of the underlying models, the inclusion of the readout layer ensures that the network retains the universal approximation property established for classical feedforward neural networks \cite{cybenko1989approximation,leshno1993multilayer}.

\begin{theorem}\label{thm:UA}
Let $\rho$ be a Borel probability measure with compact support $\Omega\Subset\RR^d$, and let $f \in C_0(\Omega)\cap L^2(\rho)$. Let $\sigma: \RR\to\RR$ be a Lipschitz, non-polynomial activation function. Then, for every $\varepsilon > 0$, there exists $L,P\in \mathbb{N}^\ast$, $\mu > 0$, and a Spiking Neural Network $\SNN:\RR^d\to\RR$ with architecture given by \eqref{eq:SNN} such that 
\begin{align*}
    \|\SNN - f \|_{L^2(\rho)} < \varepsilon,
\end{align*}
provided
\begin{align*}
    \mu \leq \frac{\varepsilon}{C},    
\end{align*}
where $C = C(\sigma,T,w,\tau_u,\*\nu) > 0$ depends only on the regularity of $\sigma$, the time interval $T$, the parameters $w$ and $\tau_u$ in \eqref{eq:model_post}, and the readout parameters $\*\nu = \{\nu_p\}_{p=1}^P$.
\end{theorem}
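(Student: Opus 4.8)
The plan is to read the readout functional $\MC R(\*u(T))=\sum_{p=1}^P\nu_p\sigma\big(u_p(T,\*x)-\theta_u\big)$ as a Cybenko--Leshno shallow network in the output features $u_p(T,\*x)$, and to reduce the theorem to the classical universal approximation result for non-polynomial activations \cite{leshno1993multilayer}. I would split the argument into an idealized construction carried out in the $\delta$-driven model \eqref{eq:model_post_deltas}, where the features admit a closed form, and a quantitative regularization estimate bounding the gap to the Gaussian model \eqref{eq:model_post} by $C\mu$.

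For the idealized network, the variation-of-constants solution of \eqref{eq:model_post_deltas}, together with the formal representation of Remark \ref{rem:mollification}, yields the closed form
\begin{align*}
    u_p^{\delta}(T,\*x)=\frac{w}{\tau_u}\sum_{t^\ast\in\MB T_{L,p}}e^{-\frac{T-t^\ast}{\tau_u}}.
\end{align*}
The input neuron encodes $\*x$ through the scalar $\langle\*a,\*x\rangle$ via the spike train $\MB T_0=\{k\beta(\*a,\*x)\}_{k=1}^K$ of Proposition \ref{prop:wp_input}, and Propositions \ref{prop:wp_hidden}--\ref{prop:spike_times_far} show that these spikes propagate deterministically, so each $u_p^{\delta}(T,\cdot)$ is an explicit, tunable function of $\langle\*a,\*x\rangle$ governed by $(\*a,\omega_{\ell,p},\theta_{\ell,p},\tau_u,w)$. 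Exploiting this freedom, I would select the parameters and the width $P$ so that $\SNN^{\delta}(\*x)=\sum_{p=1}^P\nu_p\sigma\big(u_p^{\delta}(T,\*x)-\theta_u\big)$ matches a Leshno approximant of $f$ to within $\varepsilon/2$ in $L^2(\rho)$.

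Second, I would quantify the regularization error $\SNN-\SNN^{\delta}$, which has two sources. For a common spike configuration, subtracting the variation-of-constants solutions of \eqref{eq:model_post} and \eqref{eq:model_post_deltas} gives the \emph{kernel-replacement} term
\begin{align*}
    u_p(T,\*x)-u_p^{\delta}(T,\*x)=\frac{w}{\tau_u}e^{-\frac{T}{\tau_u}}\sum_{t^\ast\in\MB T_{L,p}}\int_0^T e^{\frac{s}{\tau_u}}\big(\MC G_{\mu,t^\ast}(s)-\delta(s-t^\ast)\big)\,ds,
\end{align*}
and since $s\mapsto e^{s/\tau_u}$ is smooth while $\MC G_{\mu,t^\ast}$ is symmetric with variance $\mu^2$, a Taylor expansion makes the bulk of each integral $O(\mu^2)$, with boundary truncation at $t=0,T$ giving the dominant linear term. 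The second source is \emph{spike-time drift}: the Gaussian-driven thresholds cross at times displaced by $\zeta=O(\mu\sqrt{\log(1/\mu)})$ from their $\delta$-limit positions, by \eqref{eq:spike_pattern}, and propagating this through the Lipschitz dependence of $u_p(T,\cdot)$ on its input spikes again produces an $O(\mu)$ contribution, with the logarithmic factor absorbed into $C$. Combining the two, using the Lipschitz bound on $\sigma$ and summing over $p$ with weights $\*\nu$, I obtain
\begin{align*}
    \|\SNN-\SNN^{\delta}\|_{L^2(\rho)}\leq\|\SNN-\SNN^{\delta}\|_{\infty}\leq C\,\mu,
\end{align*}
with $C=C(\sigma,T,w,\tau_u,\*\nu)$, using that $\rho$ is a probability measure so the sup norm over $\Omega$ dominates. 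The triangle inequality with the idealized estimate and the choice $\mu\leq\varepsilon/C$ then close the proof.

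The hard part will be the idealized construction. One must show that the discrete timing code $\*x\mapsto\MB T_{L,p}$ yields output features rich enough to drive a Leshno-type layer despite the scalar bottleneck $\langle\*a,\*x\rangle$ imposed by the single input neuron, and---crucially for the regularization step---that $\*x\mapsto\MB T_{L,p}$ depends on $\*x$ in a Lipschitz manner, so that the $O(\mu)$ bound holds \emph{uniformly} on $\Omega$. The delicate technical point is the interface between the piecewise-smooth, reset-driven spike dynamics and the smooth readout: controlling how threshold crossings move as inputs and parameters vary, in particular near the reset events and near the boundary cases $\langle\*a,\*x\rangle\leq\theta_v$ and $K=0$ of Proposition \ref{prop:wp_input}, is where the main effort concentrates.
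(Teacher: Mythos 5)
Your proposal follows essentially the same route as the paper: reduce to a Leshno-type shallow network, encode the approximant's pre-activations in the final-time potentials of the idealized $\delta$-driven output model, and bound the gap to the Gaussian model by $C\mu$ before invoking the triangle inequality. The main difference is that the step you defer as ``the hard part'' is precisely the one the paper resolves, and it does so with a much lighter argument than you anticipate: it does not attempt to realize the spike trains through the input--hidden dynamics at all. Instead it treats the spike times $\MC T_{L,p}=\{t_{L,p,1},\dots,t_{L,p,K}\}$ as freely designable, observes that
\begin{align*}
    S(t_{L,p,1},\dots,t_{L,p,K})=\sum_{k=1}^K e^{-\frac{T-t_{L,p,k}}{\tau_u}}
\end{align*}
is continuous, increasing in each coordinate, with range $[Ke^{-T/\tau_u},K]$, and applies the intermediate value theorem to hit the target $\tfrac{\tau_u}{w}(\theta_u+\langle\*\alpha_p,\*x\rangle)$ once $K$ is large enough. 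The concerns you correctly raise --- the scalar bottleneck $\langle\*a,\*x\rangle$, the Lipschitz dependence of $\*x\mapsto\MC T_{L,p}$, and the boundary cases of Proposition \ref{prop:wp_input} --- are therefore not handled in the paper's proof either; the paper explicitly concedes this point in the subsequent discussion of the role of hidden layers. Your error analysis also differs in one respect: you introduce a spike-time-drift term of order $\mu\sqrt{\log(1/\mu)}$ coming from the displacement of threshold crossings in the Gaussian-driven hidden layers. The paper avoids this entirely by feeding the \emph{same} designed spike train to both output models \eqref{eq:model_post} and \eqref{eq:model_post_deltas} and bounding only the kernel-replacement term via $|\int_0^T\varphi\,\MC G_{\mu,t^\ast}-\varphi(t^\ast)|\leq\mu\sup|\varphi'|$, yielding $|\Delta u^{(p)}|\leq K\mu|w|/\tau_u^2$. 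Your version is more faithful to the actual network but correspondingly harder to close; if you keep the drift term you must also justify that the hidden-layer spike counts do not change as $\mu\to0^+$, which is where Propositions \ref{prop:spike_times_far} and \ref{prop:spike_times_close} would need to enter quantitatively.
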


\begin{proof}

Before entering the proof, let us highlight that our SNN architecture is rigorously defined in Subsection \ref{subsec:SNN} via the mollified dynamics \eqref{eq:model_post}, which guarantee well-posedness and trainability. Nevertheless, for the constructive part of the universal approximation argument, it will be convenient to temporarily work with the idealized $\delta$-driven system \eqref{eq:model_post_deltas}. As explained in Remark \ref{rem:mollification}, although this system is not well-posed in the classical sense, one may still assign a natural formal solution by superposing exponential responses to delta inputs. This explicit representation provides a convenient proxy that allows us to design spike trains encoding prescribed target values. Its role is justified by the fact that the mollified dynamics \eqref{eq:model_post} approximate the $\delta$-driven ones as $\mu\to 0^+$, ensuring that the final approximation result holds for the actual SNN architecture.

\medskip
\noindent We now divide the proof into multiple steps.

\medskip
\noindent\underline{Step 1: approximation by Shallow Neural Networks.} By \cite[Theorem 1]{leshno1993multilayer}, since $f\in C_0(\Omega) \cap L^2(\rho)$ and $\sigma$ is non-polynomial, there exists an integer $P \in \NN^\ast$, weights $\{\nu_p\}_{p=1}^P \subset\RR$, and directions $\{\*\alpha_p\}_{p=1}^P \subset\Omega$ such that for all $\*x\in\Omega$ the function
\begin{align*}
    f_P(\*x) = \sum_{p=1}^P \nu_p\sigma(\langle \*\alpha_p, \*x \rangle)
\end{align*}
satisfies
\begin{align*}
    \|f - f_P\|_{L^2(\rho)} < \frac{\varepsilon}{2}.    
\end{align*}

\medskip
\noindent\underline{Step 2: Construction of SNN subnets to approximate basis units.} For each $p\in\inter{P}$ and $K\in\NN^\ast$ fixed, we want to design spike times $\MC T_{L,p} = \{t_{L,p,1}, \dots, t_{L,p,K}\}\subset [0,T]$ such that the corresponding solution $u_{p,\delta}$ of \eqref{eq:model_post_deltas} satisfies
\begin{align}\label{eq:u_match}
    u_{p,\delta}(T) = \theta_u + \langle \*\alpha_p, \*x \rangle,    
\end{align}
with $\{\*\alpha_p\}_{p=1}^P$ as in Step 1. To this end, let us first recall (see Remark \ref{rem:mollification}) that this final-time solution is formally given by 
\begin{align*}
    u_{p,\delta}(T) = \frac{w}{\tau_u} \sum_{k=1}^K e^{-\frac{T - t_{L,p,k}}{\tau_u}}.    
\end{align*}
Now, define the function
\begin{align*}
    S: [0, T]^K \to \mathbb{R}, \quad S(t_{L,p,1}, \ldots, t_{L,p,K})\coloneqq \sum_{k=1}^K e^{-\frac{T - t_{L,p,k}}{\tau_u}},    
\end{align*}
and notice that $S$ is continuous $[0, T]^K$ since each exponential term is continuous in $t_{L,p,k}$, and sums of continuous functions are continuous. Moreover, $S$ is strictly increasing in each coordinate $t_{L,p,k}$ and
\begin{align*}
    &\text{if all } t_{L,p,k} = 0, \text{ then } S = Ke^{-\frac{T}{\tau_u}},
    \\
    &\text{if all } t_{L,p,k} = T, \text{ then } S = K.
\end{align*}
Hence, $S$ is surjective onto its range
\begin{align*}
    S([0, T]^K) = \left[K e^{-\frac{T}{\tau_u}}, K \right].
\end{align*}
Let now 
\begin{align*}
    S^\ast\coloneqq \frac{\tau_u}{w} \Big(\theta_u + \langle \*\alpha_j, \*x \rangle \Big),    
\end{align*}
and assume
\begin{align}\label{eq:S_cond}
    S^\ast\in \left[K e^{-\frac{T}{\tau_u}}, K \right].
\end{align}

Since $S$ is continuous and maps a compact set onto this interval, the intermediate value theorem ensures that there exists a tuple $(t_{L,p,1}, \ldots, t_{L,p,K})\in [0, T]^K$ such that $S(t_{L,p,1}, \dots, t_{L,p,K}) = S^\ast$. In conclusion, we can always choose spike times to fulfill \eqref{eq:u_match}, as long as $K$ is large enough so that \eqref{eq:S_cond} holds.

\medskip
\noindent\underline{Step 3: error estimates.} For all $p\in\inter{P}$, let $\MC T_{L,p}$ be constructed as in Step 2, and let $u_p(T)$, $u_{p,\delta}(T)$ be the corresponding final-time solutions of \eqref{eq:model_post} and \eqref{eq:model_post_deltas}. Moreover, let 
\begin{align*}
    \Delta u^{(p)}\coloneqq u_p(T) - u_{p,\delta}(T) &= \frac{w}{\tau_u} e^{-\frac{T}{\tau_u}}\sum_{t^\ast \in\MC T_{L,p}} \left[\int_0^T e^{\frac{s}{\tau_u}} \MC{G}_{\mu,t^\ast}(s)\, ds - e^{\frac{t^\ast}{\tau_u}} \right]
    \\
    &= \frac{w}{\tau_u} e^{-\frac{T}{\tau_u}}\sum_{t^\ast \in\MC T_{L,p}} \left[\int_0^T \varphi(s)\MC{G}_{\mu,t^\ast}(s)\, ds - \varphi(t^\ast) \right],
\end{align*}
with $\varphi(s)\coloneqq e^{\tau_u^{-1}s}$. Then, since $\MC{G}_{\mu,t^\ast}(\cdot)$ is a Gaussian (in particular, a mollifier), we easily get that
\begin{align*}
    \left|\int_0^T \varphi(s) \MC{G}_{\mu,t^*}(s) ds - \varphi(t^*) \right| \leq \mu\sup_{s \in [0,T]} |\varphi'(s)| = \frac{\mu}{\tau_u}e^{\frac{T}{\tau_u}}.  
\end{align*}
Using this and the definition of $\MC T_{L,p}$, we can then estimate
\begin{align*}
    |\Delta u^{(p)}| \leq \frac{\mu|w|}{\tau_u^2} |\MC T_{L,p}| = \frac{K\mu|w|}{\tau_u^2}.
\end{align*}
Hence, if we define $z_p\coloneqq \sigma(u_p(T) - \theta_u)$, using \eqref{eq:u_match} and the above estimate we get
\begin{align*}
    \Big|z_p - \sigma(\langle \*\alpha_p, \*x \rangle)\Big| = \Big|\sigma(u_p(T) - \theta_u) - \sigma(u_{p,\delta}(T) - \theta_u) \Big| \leq \text{Lip}(\sigma) |\Delta u^{(p)}| \leq \frac{K\mu|w|\text{Lip}(\sigma)}{\tau_u^2},     
\end{align*}
where $\text{Lip}(\sigma)$ denotes the Lipschitz constant of $\sigma$. Using this, we can estimate the total error
\begin{align*}
    \|\SNN - f \|_{L^2(\rho)} &\leq \|\SNN - f_P\|_{L^2(\rho)} + \| f_P - f\|_{L^2(\rho)} 
    \\
    &< \sum_{p=1}^P |\nu_p| \cdot \left\|\sigma(u_p(T) - \theta_u) - \sigma(\langle \*\alpha_p, \*x \rangle)\right\|_{L^2(\rho)} + \frac{\varepsilon}{2} 
    \\
    &\leq \frac{K\mu|w|\text{Lip}(\sigma)}{\tau_u^2}\sum_{p=1}^P |\nu_p| + \frac{\varepsilon}{2} \leq \frac{K\mu|w|\|\*\nu\|_1\text{Lip}(\sigma)}{\tau_u^2} + \frac{\varepsilon}{2}.
\end{align*}
Hence, if 
\begin{align*}
    \mu\leq \frac{\varepsilon\tau_u^2}{2K|w|\|\*\nu\|_1\text{Lip}(\sigma)},
\end{align*}
we finally have $\|\SNN - f \|_{L^2(\rho)} < \varepsilon$.
\end{proof}

\subsection{On the necessity of delta and Gaussian dynamics in the universal approximation argument}\label{subsec:deltas_UA}

The proof of Theorem \ref{thm:UA} relies crucially on the interplay between two different formulations of the spiking neuron dynamics: the idealized model \eqref{eq:model_post_deltas} involving Dirac delta functions, and the regularized model \eqref{eq:model_post} based on smooth Gaussian kernels. Each of these plays an essential but distinct role in establishing the universal approximation property, and neither alone would suffice. 

Concretely, the $\delta$-based model \eqref{eq:model_post_deltas} is employed as a mathematical abstraction to construct spike trains that precisely encode the values of the target function to be approximated. At this regards, we shall highlight that, although spike times are generated by the intrinsic dynamics of the LIF model, our constructive arguments show that, by suitable parameter choices, one can ensure that the resulting dynamics produce spike trains with prescribed timings. 

In this sense, the spike times can be designed indirectly. In our case, because the response of $u_{p,\delta}(T)$ to a spike at time $t^*$ is an exponential term 
\begin{align*}
    e^{-\frac{T - t^*}{\tau_u}},    
\end{align*}
the final value becomes an explicit sum:
\begin{align*}
    u_{p,\delta}(T) = \frac{w}{\tau_u} \sum_{k=1}^K e^{-\frac{T - t_{L,p,k}}{\tau_u}}.    
\end{align*}

This formula allows for direct control of the output by carefully choosing the spike times $\{t_{L,p,k}\}$, since each spike contributes independently and additively. Thus, one can solve an inverse problem \textendash\, finding spike times that realize a prescribed value \textendash\, using a well-understood basis of exponential functions.

This tractability fails in the Gaussian-regularized model, where the dynamics are instead governed by a model \eqref{eq:model_post} involving smooth Gaussian bumps centered at the spike times. In this setting, the solution $u_p(T)$ is no longer a clean sum of isolated terms \textendash\, each spike's contribution spreads over time, and neighboring spikes interfere with one another due to the overlap of Gaussians and the integration over the full interval. As a result, the map from spike times to $u_p(T)$ becomes nonlinear, entangled, and difficult to invert explicitly, especially as the number of spikes grows. Therefore, the Gaussian model does not allow a direct constructive encoding of target values.

On the other hand, the Gaussian-regularized model is essential for defining the actual network implementation, as it yields smooth, well-posed dynamics that are amenable to both mathematical analysis and numerical simulation. In contrast, the $\delta$-based model involves impulsive right-hand sides with Dirac distributions, which make the differential equations ill-posed in the classical sense. Specifically:
\begin{itemize}
    \item[1.] The equation does not admit solutions in standard function spaces like $C^1$ or $L^2$, but only in the sense of distributions or measures.  
    \item[2.] The membrane potential $u_{p,\delta}(t)$ becomes discontinuous or non-differentiable at spike times, which complicates the definition and uniqueness of solutions.
\end{itemize}

By replacing each delta function $\delta(t - t^\ast)$ with a smooth, compactly supported approximation $G_{\mu,t^\ast}(t)$, we recover a system with continuous and differentiable dynamics that admits a unique classical solution and can be treated using standard ODE theory.

In particular, the membrane potential $u_p(t)$ becomes a smooth function of time, even in the presence of multiple spikes, and the dependence of $u_p(T)$ on spike times and network parameters becomes differentiable, which is crucial for computing gradients and training the network via standard optimization techniques.

Therefore, the Gaussian model serves as a practically and mathematically viable surrogate for the ideal $\delta$-based model. While it does not allow for explicit spike-time design, its smoothness enables trainability and implementability, and we recover expressiveness by showing that its behavior converges to that of the delta model as the smoothing parameter $\mu \to 0^+$.

In conclusion, both systems are indispensable: the $\delta$-based model provides a constructive encoding mechanism for function values via spike timing, while the Gaussian-based model ensures smooth realizability and robust approximation of the desired behavior.

This dual modeling strategy mirrors classical approximation theory, where one constructs an exact interpolant in an idealized basis (e.g., splines, wavelets) and then implements an approximate version via practical basis functions \cite{devore1993constructive,haar1911theorie,meyer1992wavelets}. In our setting, spikes and their temporal summation play the role of such basis elements, and Gaussian smoothing ensures continuity and differentiability without sacrificing expressive power.

\subsection{On the role of hidden layers and the depth–width tradeoff}

The proof of Theorem \ref{thm:UA} exploits only the explicit representation of the output potentials $u_p(T)$, without relying on the detailed dynamics of the input and hidden layers. One might therefore wonder whether the hidden layers are dispensable, and whether an architecture consisting solely of the output layer \eqref{eq:model_post} would already suffice for universal approximation.

From a purely functional-analytic standpoint, this is correct: universality follows from the ability of the output neurons to generate sums of exponential responses to spike times, which in principle can be prescribed directly. In this abstract sense, hidden layers do not affect the \textit{existence} of an approximating network. Nevertheless, the hidden layers remain indispensable for several reasons:
\begin{itemize}
    \item[1.] \textbf{Encoding mechanism.} The output layer does not receive the input $\*x$ directly: it only integrates spike trains. Without hidden layers, there is no mechanism to transform the input into spike patterns that the output neurons can process. In other words, the hidden layers are the only part of the architecture that connects the static input domain to the temporal structure required by the output layer. This role cannot be bypassed without changing the model itself.
    \item[2.] \textbf{Information richness and scalability.} The sets of spike trains $\MC T_{\ell,p}$ generated in the hidden layers' neurons form the effective dictionary from which the output layer builds its exponential sums. The number of layers $L$ and neurons per layer $P$ directly determine how large and flexible this dictionary is. A shallow or narrow hidden representation may only generate a limited repertoire of spike trains, leading to poor expressive capacity in practice even if universality holds in principle.
    \item[3.] \textbf{Minimal architectures and analogy with neural ODEs.} In analogy with the classical universal approximation theorems, one hidden layer with sufficiently many neurons is expected to suffice for universality. Conversely, one may drastically reduce the width if depth is increased: with only one neuron per layer but sufficiently many layers, universality should still hold. This mirrors the results of \cite{alvarez2024interplay}, where the authors prove that in ResNet-type architectures interpolation can be achieved either with a single wide layer or with many narrow layers. While our proof does not establish such a precise depth–width tradeoff for SNNs, it is consistent with this expectation and suggests that a similar phenomenon underlies the universality of spiking architectures.
\end{itemize}

In summary, although the formal proof of Theorem \ref{thm:UA} is phrased entirely in terms of the output layer, the hidden layers are structurally necessary to link the static input $\*x$ with the temporal spike-based encoding on which the output relies. Their number and size control the richness of the available encodings, and therefore the efficiency and robustness of approximation. In this sense, the hidden layers are not an optional embellishment, but the essential mechanism through which SNNs realize universal approximation.

To see this more concretely, note that if one were to remove all hidden layers and keep only the output dynamics, then the spike times feeding the output neurons would have to be fixed a priori, say $\MC T=\{t^*_1,\dots,t^*_m\}$, with no dependence on the input $\*x$. The outputs would then be only dependent on the time horizon $T$. Such a device can only approximate constant functions on the input space, and in that sense it is not a neural network at all, since it lacks an input–output relation.

By contrast, with even a single hidden layer the spike times $\MC T_{1,p}(\*x)$ are generated dynamically in response to $\*x$. The output then becomes dependent on $\*x$ through the spike locations. This input dependence is exactly what gives SNNs their expressive power. In this sense, the hidden layers are therefore not cosmetic additions but the indispensable mechanism that transforms the output dynamics into a genuine SNN capable of universal approximation.

\subsection{Extension to multiclass classification} 

Although in our analysis we have restricted ourselves to the case of binary classification, where the dataset labels are of the form $y_i \in \{0,1\}$, the framework extends naturally to an arbitrary number of classes without altering the dynamical analysis or the universal approximation result. This requires a simple modification, which consists in replacing the single readout neuron $\nread$ by a layer of $C \geq 2$ readout neurons $\{\nread_c\}_{c=1}^C$, each producing a final-time potential
\begin{align*}
   \MC R^{(c)}\big(\*u(T,\*x_i)\big), \quad c \in\inter{C}, 
\end{align*}
for each datum $\*x_i$, and generating the predictions via the softmax mapping:
\begin{align*}
   y^{(c)}_i = \MC R^{(c)}\big(\*u(T,\*x_i)\big)\left(\sum_{j=1}^C \MC R^{(j)}(\*u(T,\*x_i))\right)^{-1}, \quad c\in\inter{C}. 
\end{align*}

Notice that the well-posedness results of Section \ref{sec:well_posed}, which establish the existence, uniqueness, and characterization of spike times, remain unchanged. They are proved neuron-wise and are independent of the number of readout neurons.

Moreover, the Universal Approximation Theorem \ref{thm:UA}, stated for scalar-valued target functions, extends immediately to vector-valued mappings $f:\Omega\to\RR^C$ by applying the construction component-wise. 

In conclusion, the transition from binary to multiclass classification requires only a modification of the final readout layer, and all the analytical results on well-posedness and universal approximation remain valid without major alteration.

\section{Learning framework}\label{sec:training}
Denote 
\begin{equation}\label{eq:params_stacked}
    \begin{array}{l}
        \Upsilon\coloneqq \Big(\*a,(\*\omega_1,\ldots,\*\omega_L), w\Big)\in\RR^d\times\RR^{PL}\times\RR
        \\
        \*\nu = (\nu_1,\ldots,\nu_P)\in\RR^P
    \end{array}
\end{equation}
the parameters of our SNN, with $\Upsilon$ indicating the ``dynamics'' parameters of the input, hidden and output layers, and $\nu$ the parameters of the static readout layer. Their training will be carried out through standard empirical risk minimization, i.e. minimizing the cost functional 
\begin{align}\label{eq:functional}
    G(\Upsilon,\*\nu) = \frac 1N\sum_{i=1}^N \loss\Big(\MC R(\*u(T,\*x_i),y_i\Big) + \gamma \Big(\|\Upsilon\|^2 + \|\nu\|^2\Big),
\end{align}
where $\gamma>0$ is a regularization parameter, $\sigma:\RR\to (0,1)$ is a Lipschitz activation function, $\loss:\RR\to\RR^+$ is a suitable loss function, and 
\begin{align*}
    \|\Upsilon\|^2 = \|\*a\|^2 + \sum_{\ell=1}^L \|\*\omega_\ell\|^2 + |w|^2.
\end{align*}

To minimize \eqref{eq:functional} we will apply a gradient-based algorithm, which requires computing the gradients $\nabla_\Upsilon G$ and $\nabla_{\*\nu}G$ with respect to the model parameters. 

For what concerns the readout parameters $\*\nu$, this gradient is obtained as a direct application of the chain rules for derivatives. In particular, if we denote by $z_i = \MC R(\*u(T,\*x_i))$ the network's prediction on the $i$-th datum $\*x_i$, we have that
\begin{align*}
    \nabla_{\nu_p} G = 2\gamma\nu_p + \frac 1N \sum_{i=1}^N \partial_{z_i}\loss(z_i,y_i)\sigma(u_p(T)-\theta_u), \quad\text{ for all } p\in\inter{P}.
\end{align*}

As for $\nabla_\Upsilon G$, the computation is more delicate due to the reset mechanisms in the input and hidden layers, which introduces non-differentiability points and jumps in the state trajectories of the LIF dynamics. To rigorously address this issue, we will apply a ``surrogate gradient'' approach, based on the mollification of the discontinuous spike events by replacing the hard thresholding in the LIF model with differentiable surrogate functions (see, e.g., \cite{eshraghian2023training}). This yields smooth approximations of the state trajectories and well-defined gradients $\nabla_\Upsilon G$ while preserving the qualitative spiking behavior. Once the dynamics are regularized, the computation of $\nabla_\Upsilon G$ can be rigorously formulated through an optimal control framework. The resulting first-order optimality system consists of a coupled forward–backward problem: the forward dynamics describe the evolution of membrane potentials and synaptic currents, while the adjoint dynamics propagate the sensitivity of the objective functional with respect to state perturbations. This adjoint methodology provides a principled way to evaluate parameter gradients, in direct analogy with Pontryagin's Maximum Principle \cite{pontrjagin1962mathematical,sontag2013mathematical,trelat2005controle} or backpropagation through time \cite{lecun2015deep}.

\subsection{Computation of $\nabla_\Upsilon G$} 
Let 
\begin{align*}
    \MB T \coloneqq \bigcup_{\ell=0}^L \MB T_\ell 
\end{align*}
be the set of all spike times of the input and hidden layers. For all $t\in (0,T)$ and parameters $\Upsilon$ as in \eqref{eq:params_stacked}, we define the stacked state vector
\begin{align*}
    X(t) = \Big(v(t),\{\*\xi_\ell(t)\}_{\ell=1}^L,\*u(t)\Big)^\top = \Big(v(t),\big\{(\xi_{\ell,1}(t),\ldots,\xi_{\ell,P}(t))\big\}_{\ell=1}^L,\big(u_1(t),\ldots,u_P(t)\big)\Big)^\top 
\end{align*}
and the stacked dynamics 
\begin{align*}
    \MC F(X(t),\Upsilon) = \Big(f_v, \{f_{\ell,1},\ldots,f_{\ell,P}\}_{p=1}^P, \{f_{u_p}\}_{p=1}^P\Big)^\top,
\end{align*} 
where
\begin{equation}\label{eq:forward}
    \begin{array}{lll}
        \text{Input layer} & f_v\coloneqq \tau_v^{-1}\Big(-v+\langle \*a,\*x_i\rangle\Big) & \text{ for all } i\in\inter{N}
        \\
        \text{Hidden layers} & f_{\xi_{\ell,p}}\coloneqq \tau_{\ell,p}^{-1}\Big(-\xi_{\ell,p}+\omega_{\ell,p}J_\ell\Big) & \text{ for all } \ell\in\inter{L}\text{ and } p\in\inter{P}
        \\
        \text{Output layer} & f_{u_p}\coloneqq \tau_u^{-1}\Big(-u_p+w\Phi_p\Big) & \text{ for all } p\in\inter{P}
    \end{array}
\end{equation}

With this notations, the input-hidden-output layer evolution of our SNN is governed by the ODE system
\begin{align}\label{eq:dynamics_stacked}
    \begin{cases}
        \dot X(t) = \MC F(X(t),\Upsilon), & t\in (0,T)\setminus\MB T
        \\
        X(0)=0
        \\
        X^+ = \MC J(X^-)
    \end{cases}
\end{align}
where $X^+ = \MC J(X^-)$ indicates the jump conditions
\begin{align*}
    v_k^- = \theta_v,\; v_k^+=0 \quad\text{ and }\quad \xi_{\ell,p,k}^- = \theta_{\ell,p},\; \xi_{\ell,p,k}^+=0.
\end{align*}
Moreover, by applying standard variational theory, we know that 
\begin{align*}
    \nabla_\Upsilon G = 2\gamma\Upsilon -\int_0^T \lambda(t)^\top\partial_\Upsilon \MC F\big(X(t),\Upsilon\big)\,dt,
\end{align*}
where the adjoint states
\begin{align*}
    \lambda(t) = \Big(\lambda_v(t),\{\lambda_{\*\xi_\ell}(t)\}_{\ell=1}^L,\lambda_{\*u(t)}\Big) = \Big(\lambda_v(t),\{(\lambda_{\xi_{\ell,1}}(t),\ldots,\lambda_{\xi_{\ell,P}}(t))\}_{\ell=1}^L,(\lambda_{u_1}(t),\ldots,\lambda_{u_P}(t))\Big).
\end{align*}
formally satisfy the backward dynamics
\begin{align}\label{eq:adjoint_formal}
    \begin{cases}
        -\dot\lambda(t) = \Big(\partial_{X(t)} \MC F(X(t),\Upsilon)\Big)^\top \lambda(t), & t\in (0,T)\setminus\MB T
        \\[5pt]
        \displaystyle\lambda(T) = \partial_{X(T)}\left(\frac 1N\sum_{i=1}^N \Psi_i\right)   
        \\
        \text{Adjoint jump condition}
    \end{cases}
\end{align}
where for all $i\in\inter{N}$ we have denoted
\begin{align*}
    \Psi_i = \loss\Big(\MC R(\*u(T,\*x_i),y_i\Big).
\end{align*}

The ``adjoint jump condition'' in \eqref{eq:adjoint_formal} captures how the discontinuous reset of the forward dynamics at spike times affects the backward propagation of sensitivities. Intuitively, whenever the membrane potential is reset in the forward system \eqref{eq:dynamics_stacked}, the adjoint variables must also be updated to account for the sudden change in the state trajectory. This ensures that the contribution of each spike event to the overall objective functional is correctly transmitted backward in time. In other words, the jump condition enforces consistency between the discontinuous forward evolution and the backward sensitivity propagation, guaranteeing that the computed gradient faithfully reflects the effect of each reset event on the final loss. However, explicitly handling these jumps makes the adjoint equations cumbersome and the resulting gradients difficult to use in practice. This motivates the introduction of a mollification strategy to smooth out the discontinuities and eliminate the need for explicit jump conditions.

\medskip 
\noindent Concretely, let $\zeta>0$ and consider the smooth reset mollifier
\begin{align}\label{eq:mollifier}
     H_\zeta:\RR\to(0,1), \quad H_\zeta(s)\coloneqq \frac 12\left(1 + \tanh\left(\frac{\zeta s}{2}\right)\right)    
\end{align}
and the (component-wise) discharge map
\begin{align}\label{eq:discharge}
    D_\zeta(s;\theta)\coloneqq \big(1 - H_\zeta(s-\theta)\big)s.    
\end{align}
Notice that, as $\alpha\to +\infty$, we have the point-wise convergence
\begin{align*}
    H_\zeta(s)\to H(s)= \begin{cases} 1 & \text{ if } s>0 \\ \frac 12 & \text{ if } s=0 \\ 0 & \text{ if } s<0 \end{cases} \quad\text{ and }\quad 
    D_\zeta(s;\theta)\to D(s;\theta)= \begin{cases} s & \text{ if } s>\theta \\ \frac \theta2 & \text{ if } s=\theta \\ 0 & \text{ if } s<\theta \end{cases}
\end{align*}

Then, for the input neuron $v$ with threshold $\theta_v$ and for each hidden neuron $\xi_{\ell,p}$ with threshold $\theta_{\ell,p}$ and current $J_\ell(t)$ as in \eqref{eq:input_J}, consider the mollified dynamics
\begin{align}\label{eq:input_mollified}
    \begin{cases}
        \dot v(t) = \frac{1}{\tau_v}\Big(-v(t)+\langle \*a,\*x_i\rangle\Big), & t\in (0,T)\setminus \MB T_0
        \\
        v(0)=0,
        \\
        v(t^{\ast +}) = \MC D_\zeta \big(v(t^{\ast -});\theta_v\big), & \text{for all }t^*\in \MB T_0
    \end{cases}
\end{align}
and
\begin{align}\label{eq:hidden_mollified}
    \begin{cases}
        \dot\xi_{\ell,p}(t)=\frac{1}{\tau_{\ell,p}}\Big(-\xi_{\ell,p}(t)+\omega_{\ell,p}J_\ell(t)\Big), & t\in (0,T)\setminus \MC T_{\ell,p}
        \\
        \xi_{\ell,p}(0)=0,
        \\
        \xi_{\ell,p}(t^{\ast +}) = \MC D_\zeta \big(\xi_{\ell,p}(t^{\ast -});\theta_{\ell,p}\big), & \text{for all }t^*\in \MC T_{\ell,p}
    \end{cases}
\end{align}

With these mollifications \eqref{eq:input_mollified} and \eqref{eq:hidden_mollified}, the flow induced by $\MC F$ is differentiable in $X$ almost everywhere and admits well-defined Jacobians across events. Consequently, the stacked adjoint is given in the classical sense by
\begin{align*}
    \begin{cases}
        -\dot\lambda(t) = \Big(\partial_{X(t)} \MC F(X(t),\Upsilon)\Big)^\top \lambda(t), & t\in (0,T)
        \\[5pt]
        \displaystyle\lambda(T) = \partial_{X(T)}\left(\frac 1N\sum_{i=1}^N \Psi_i\right)           
    \end{cases}
\end{align*}
and all gradients with respect to $\Upsilon$ can be computed directly by automatic differentiation through the (discretized) forward solver. In the limit $\zeta\to +\infty$, the mollified dynamics recover the original hybrid model (see Section \ref{subsec:mollified_convergence}) while retaining numerically stable gradients for training.

Finally, let us stress that there is no need of introducing a mollified version of the output neurons' dynamics, since in \eqref{eq:model_post} we have introduced no reset mechanism and the solution $u_p$ is smooth over the whole time interval $(0,T)$.

\subsubsection{Convergence of the mollified dynamics}\label{subsec:mollified_convergence}

For completeness, we analyze here the convergence of the mollified dynamics \eqref{eq:input_mollified} and \eqref{eq:hidden_mollified} to their hybrid counterparts \eqref{eq:model_pre} and \eqref{eq:model_hidd}. 

\begin{proposition}\label{prop:relaxation_input}
Let $T>0$ and fix $\*a\in\RR^d$, $\tau_v>0$, and $\theta_v>0$. Let $v$ denote the solution of \eqref{eq:model_pre} with spike times set $\MB T_0=\{t_{0,k}\}_{k=1}^K$ characterized in Proposition \ref{prop:wp_input}. We assume all spikes are transversal threshold crossings, i.e. $\dot v(t_{0,k})\neq 0$ for all $k\in\inter{K}$. Let $\{H_\zeta\}_{\zeta>0}$ be a family of smooth reset mollifiers $H_\zeta:\RR\to(0,1)$ defined as in \eqref{eq:mollifier}. For each $\zeta>0$ define the discharge map $D_\zeta(z;\theta)$ as in \eqref{eq:discharge} and let $v_\zeta$ be the solution to the reset–mollified input dynamics \eqref{eq:input_mollified}. Then 
\begin{align*}
    \lim_{\zeta\to +\infty} \sup_{t\in(0,T)}|v_\zeta(t)-v(t)|=0,    
\end{align*}
that is, $v_\zeta \to v$ uniformly on $(0,T)$.
\end{proposition}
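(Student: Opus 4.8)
The plan is to exploit the fact that between spike times the mollified dynamics \eqref{eq:input_mollified} and the hybrid dynamics \eqref{eq:model_pre} obey \emph{exactly the same} ODE, namely $\dot{w} = \tau_v^{-1}(-w + \langle \*a, \*x_i\rangle)$, and differ only in how they handle the reset at each $t_{0,k}$. The hybrid system applies a hard reset $v(t_{0,k}^+) = 0$, whereas the mollified system applies the smooth discharge $v_\zeta(t_{0,k}^{\ast+}) = D_\zeta(v_\zeta(t_{0,k}^{\ast-}); \theta_v)$. Since the flow of the common linear ODE is a contraction (or at worst has a controlled Lipschitz dependence on initial data, with constant $e^{T/\tau_v}$ on $(0,T)$), the whole argument reduces to controlling the \emph{discrepancy injected at each reset} and showing it vanishes as $\zeta \to +\infty$.

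First I would set up an inductive argument over the $K$ inter-spike intervals $(t_{0,k-1}, t_{0,k})$, with $t_{0,0}=0$. On the first interval both solutions start from $0$ and solve the identical ODE, so $v_\zeta \equiv v$ there and in particular $v_\zeta(t_{0,1}^-) \to \theta_v$. The key local estimate is at the reset: by Proposition \ref{prop:wp_input} the exact solution crosses $\theta_v$ transversally (this is the stated hypothesis $\dot v(t_{0,k})\neq 0$), so near $t_{0,1}$ the value $v_\zeta(t_{0,1}^{\ast-})$ is close to $\theta_v$, and I would evaluate $D_\zeta(\theta_v; \theta_v) = (1 - H_\zeta(0))\theta_v = \tfrac12\theta_v$ together with the limiting behavior $D_\zeta(s;\theta_v)\to 0$ for $s$ slightly below $\theta_v$. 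The crucial point is that as $\zeta\to+\infty$ the smooth discharge map $D_\zeta(\cdot;\theta_v)$ converges pointwise to the hard reset $D(\cdot;\theta_v)$ (given in the excerpt), so the post-reset value $v_\zeta(t_{0,1}^{\ast+}) \to 0$. I would then propagate the (now small) error through the next inter-spike interval using the linear-flow Lipschitz bound, pick up the next reset error, and iterate. Since there are only finitely many spikes $K$, the total accumulated error is bounded by $K$ times a single-reset error times the amplification factor $e^{T/\tau_v}$, and hence tends to $0$.

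The main obstacle I anticipate is a subtle one about the \emph{location} of the mollified spike times. The hard reset fires exactly when $v$ hits $\theta_v$, but the mollified discharge map $D_\zeta$ does not produce a clean instantaneous drop to $0$: it continuously discounts the potential, so $v_\zeta$ may cross the threshold at a slightly shifted time $t_{0,k}^\zeta \neq t_{0,k}$, and one must verify these perturbed crossing times converge to $t_{0,k}$. Transversality is exactly what rescues this: because $\dot v(t_{0,k})\neq 0$, the implicit function theorem guarantees that the crossing time depends continuously on small perturbations of the trajectory, so $t_{0,k}^\zeta \to t_{0,k}$ as $\zeta\to+\infty$, and the temporal mismatch contributes an error that also vanishes. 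I would therefore structure the induction to control simultaneously (i) the sup-norm error on each open inter-spike interval and (ii) the convergence of the crossing times, treating the narrow transition windows around each $t_{0,k}$ (of width $O(\zeta^{-1})$) separately and bounding the trajectory there by its boundedness (all solutions stay in $[0, \langle\*a,\*x_i\rangle]$, a compact set). Uniform convergence on all of $(0,T)$ then follows by combining the finitely many interval estimates.
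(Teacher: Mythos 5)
Your overall strategy coincides with the paper's: both solutions obey the identical linear ODE between consecutive spike times, so the whole error is injected at the resets, propagated forward by the contractive factor $e^{-(t-t_{0,k})/\tau_v}$, and summed over the finitely many spikes. One structural remark: the ``main obstacle'' you anticipate --- drifting crossing times $t_{0,k}^\zeta\neq t_{0,k}$ --- does not arise in the paper's formulation, because in \eqref{eq:input_mollified} the discharge map is applied at the \emph{hybrid} spike times $t^\ast\in\MB T_0$, so the two systems reset simultaneously by definition; the convergence of the mollified crossing times is the content of the separate Proposition \ref{prop:spike_times_convergence}, which is proved exactly as you sketch (transversality plus the implicit function theorem). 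That part of your plan is therefore not wrong, but it addresses a different statement.

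There is, however, a genuine gap at the reset step, and you in fact write down the obstruction to your own conclusion. You correctly compute $D_\zeta(\theta_v;\theta_v)=(1-H_\zeta(0))\,\theta_v=\tfrac12\theta_v$, a value independent of $\zeta$. Since $v_\zeta\equiv v$ on $(0,t_{0,1})$, the pre-reset value at the first spike is exactly $\theta_v$, so $v_\zeta(t_{0,1}^+)=\tfrac12\theta_v$ for every $\zeta$, which does not tend to $0=v(t_{0,1}^+)$. Pointwise convergence of $D_\zeta$ to the hard reset cannot rescue this, because the argument sits precisely at the discontinuity of the limit map: for a sequence $s_\zeta\to\theta_v$ one has $D_\zeta(s_\zeta;\theta_v)\to 0$ only when $\zeta\,(s_\zeta-\theta_v)\to+\infty$, i.e.\ only when the pre-reset value exceeds the threshold by a margin large compared with $1/\zeta$. (You also state the limit in the wrong direction: $D_\zeta(s;\theta_v)\to 0$ for $s$ slightly \emph{above} $\theta_v$, and $D_\zeta(s;\theta_v)\to s$ for $s$ below.) The paper's proof invokes transversality to assert that $v_\zeta(t_{0,k}^-)-\theta_v>0$ for large $\zeta$, but any complete argument must quantify that margin --- e.g.\ by letting the mollified trajectory discharge at its own (slightly later) crossing, where transversality guarantees an overshoot of order $O(1)$ in $\zeta$ --- and your proposal, which keeps the resets at the exact hybrid times, supplies no such margin. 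As written, the step ``the post-reset value $v_\zeta(t_{0,1}^{\ast+})\to 0$'' fails.
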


\begin{proof} 

First of all, let us notice that, if $\langle \*a,\*x_i\rangle \leq\theta_v$, then we have from Proposition \ref{prop:wp_input} that $\MB T_0=\varnothing$ and, clearly, $v\equiv v_\zeta$ for all $t\in (0,T)$. 

\smallskip
\noindent Assume now that $\langle \*a,\*x_i\rangle >\theta_v$. We shall proceed through two steps.

\medskip
\noindent\underline{Step 1: agreement of flows between spikes.} 
Let $\MB T_0=\{t_{0,k}\}_{k=1}^K\subset(0,T)$ as in Proposition \ref{prop:wp_input}. On each subinterval $[t_{0,k},t_{0,k+1})$, both $v$ and $v_\zeta$ solve the same linear ODE with (possibly) different initial data at $t_{0,k}^+$. Hence, for $t\in[t_{0,k},t_{0,k+1})$, we have
\begin{align}\label{eq:v_diff}
    v_\zeta(t)-v(t) = \big(v_\zeta(t_{0,k}^+)-v(t_{0,k}^+)\big)e^{-\frac{t-t_{0,k}}{\tau_v}}.    
\end{align}
Now, at each spike time $t_{0,k}$, we have
\begin{align*}
    v(t_{0,k}^+)=0 \quad\text{ and }\quad v_\zeta(t_{0,k}^+) = \big(1-H_\zeta(v_\zeta(t_{0,k}^-)-\theta_v)\big)v_\zeta(t_{0,k}^-).    
\end{align*}

Moreover, we also know that $v_\zeta(t_{0,k}^-)\to v(t_{0,k}^-)=\theta_v$ and, because the threshold crossing in the hybrid system is transversal ($\dot v(t_{0,k})\neq 0)$, for sufficiently large $\zeta$, the mollified trajectory approaches the threshold from the super-threshold side: $v_\zeta(t_{0,k})-\theta_v > 0$. Consequently,
\begin{align*}
    H_\zeta\big(v_\zeta(t_{0,k}^+)-\theta_v\big)\to 1 \quad\text{ as }\zeta\to +\infty
\end{align*}
and
\begin{align*}
    \lim_{\zeta\to +\infty} v_\zeta(t_{0,k}^+) = \lim_{\zeta\to +\infty} \big(1-H_\zeta(\cdot)\big)v_\zeta(t_{0,k}^-) = 0 = v(t_{0,k}^+).
\end{align*}
Therefore, 
\begin{align}\label{eq:v_diff2}
    e_{\zeta,k}\coloneqq |v_\zeta(t_{0,k}^+)-v(t_{0,k}^+)| \to 0 \text{ as } \zeta\to +\infty \text{ for every } k\in\inter{K}.
\end{align}
Combining \eqref{eq:v_diff} and \eqref{eq:v_diff2} yields, for all $t\in[t_{0,k},t_{0,k+1})$,
\begin{align*}
    \sup_{t\in[t_{0,k},t_{0,k+1})}|v_\zeta(t)-v(t)| \leq e_{\zeta,k}\,e^{-\frac{t-t_{0,k}}{\tau_v}} \leq e_{\zeta,k}.    
\end{align*}
Taking the supremum over $k$ finally gives
\begin{align*}
    \sup_{t\in(0,T)\setminus \MB T_0}|v_\zeta(t)-v(t)| \leq \max_{k\in\inter{K}} e_{\zeta,k} \to 0 \quad\text{ as } \zeta\to +\infty. 
\end{align*}
Thus $v_\zeta\to v$ uniformly. 
\end{proof}

Having established the uniform convergence for the input neuron's mollified dynamics \eqref{eq:input_mollified}, we now pass to the hidden layers. The mechanism is entirely parallel: between spikes the trajectories follow the same linear RC flow (now driven by the presynaptic current $J_\ell$), and the only approximation occurs at resets through the same discharge map $D_\zeta$. Consequently, the time-change argument and the control of post-reset mismatches carry over verbatim, neuronwise and layerwise. For completeness and notational continuity, we state next the hidden-layer analogue; its proof is identical in structure to the input-layer case and is therefore omitted.

\begin{proposition}\label{prop:relaxation_hidden}
Let $T>0$ and for given $\ell\in\inter{L}$ and $p\in\inter{P}$ fix $\omega_{\ell,p}\in\RR$, $\tau_{\ell,p}>0$ and $\theta_{\ell,p}>0$. Let $\xi_{\ell,p}$ denote the solution of \eqref{eq:model_hidd} with spike times set $\MC T_{\ell,p}$. We assume all spikes are transversal threshold crossings, i.e. $\dot \xi_{\ell,p}(t^\ast)\neq 0$ for all $t^\ast\in\MC T_{\ell,p}$. Let $\{H_\zeta\}_{\zeta>0}$ be a family of smooth reset mollifiers $H_\zeta:\RR\to(0,1)$ defined as in \eqref{eq:mollifier}. For each $\zeta>0$ define the discharge map $D_\zeta(z;\theta)$ as in \eqref{eq:discharge} and let $\xi_{\zeta,\ell,p}$ be the solution to the reset–mollified hidden dynamics \eqref{eq:hidden_mollified}. Then 
\begin{align*}
    \lim_{\zeta\to +\infty} \sup_{t\in(0,T)}|\xi_{\zeta,\ell,p}(t)-\xi_{\ell,p}(t)|=0,    
\end{align*}
that is, $\xi_{\zeta,\ell,p} \to \xi_{\ell,p}$ uniformly on $(0,T)$.
\end{proposition}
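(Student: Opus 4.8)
The plan is to reproduce, neuron‑wise, the two‑part argument of Proposition~\ref{prop:relaxation_input}, since the only $\zeta$‑dependence again enters through the discharge map $\MC D_\zeta$ applied at the prescribed reset times $\MC T_{\ell,p}$, while the inter‑spike evolution is the identical linear RC flow for both $\xi_{\ell,p}$ and $\xi_{\zeta,\ell,p}$. First I would dispose of the degenerate case: if $\MC T_{\ell,p}=\varnothing$ then no reset is ever activated, so both trajectories solve the same linear ODE \eqref{eq:model_hidd_split} from the common initial datum $\xi(0)=0$ and coincide identically, making convergence trivial. So assume $\MC T_{\ell,p}=\{t_{\ell,p,k}\}_{k=1}^{K}\neq\varnothing$.

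The only genuinely new feature relative to the input layer is the time‑dependent forcing $\omega_{\ell,p}J_\ell(t)$, and the key observation is that it cancels in the difference. Indeed, on each inter‑spike interval $[t_{\ell,p,k},t_{\ell,p,k+1})$ both $\xi_{\ell,p}$ and $\xi_{\zeta,\ell,p}$ obey \eqref{eq:model_hidd} with the \emph{same} current $J_\ell$, so the error $e_\zeta\coloneqq\xi_{\zeta,\ell,p}-\xi_{\ell,p}$ solves the homogeneous equation $\dot e_\zeta=-\tau_{\ell,p}^{-1}e_\zeta$. Integrating yields the contraction estimate
\[
  e_\zeta(t)=e_\zeta\big(t_{\ell,p,k}^+\big)\,e^{-(t-t_{\ell,p,k})/\tau_{\ell,p}},\qquad t\in[t_{\ell,p,k},t_{\ell,p,k+1}),
\]
which is the exact analogue of \eqref{eq:v_diff}, with $J_\ell$ playing the passive role previously held by the constant input $\langle\*a,\*x_i\rangle$. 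Thus the inter‑spike flow can only contract the post‑reset mismatch, and it suffices to control that mismatch at each spike.

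It then remains to show $e_{\zeta,k}\coloneqq\big|\xi_{\zeta,\ell,p}(t_{\ell,p,k}^+)\big|\to 0$, exactly as in Step~1 of the input‑layer proof. Since $\xi_{\ell,p}(t_{\ell,p,k}^+)=0$ and, by \eqref{eq:hidden_mollified} and \eqref{eq:discharge},
\[
  \xi_{\zeta,\ell,p}\big(t_{\ell,p,k}^+\big)=\big(1-H_\zeta\big(\xi_{\zeta,\ell,p}(t_{\ell,p,k}^-)-\theta_{\ell,p}\big)\big)\,\xi_{\zeta,\ell,p}\big(t_{\ell,p,k}^-\big),
\]
I would use $\xi_{\zeta,\ell,p}(t_{\ell,p,k}^-)\to\theta_{\ell,p}$ together with the transversality hypothesis $\dot\xi_{\ell,p}(t_{\ell,p,k})\neq 0$ to conclude that, for $\zeta$ large, the mollified trajectory reaches the reset from the super‑threshold side, so $H_\zeta(\cdot)\to 1$ and hence $e_{\zeta,k}\to 0$ as $\zeta\to+\infty$. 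Combining with the contraction estimate gives $\sup_{[t_{\ell,p,k},t_{\ell,p,k+1})}|e_\zeta|\leq e_{\zeta,k}$, and taking the maximum over the finitely many $k\in\inter{K}$ yields uniform convergence on $(0,T)$.

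I expect the delicate point to be precisely this transversal, super‑threshold approach at the resets: one must argue that the accumulated soft‑reset residual keeps $\xi_{\zeta,\ell,p}$ slightly above threshold at the crossing, so that $H_\zeta$ saturates to $1$ rather than to its midpoint value $\tfrac12$. The smoothness $J_\ell\in C^\infty(0,T)$, inherited from the Gaussian definition \eqref{eq:input_J}, guarantees that both flows are classical solutions between resets, so no regularity obstruction arises and the structural step is otherwise identical to the input case. Everything else — the homogeneous decay and the finite‑sum bookkeeping — carries over verbatim.
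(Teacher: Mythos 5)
Your proposal matches the paper's intent exactly: the paper omits this proof, stating it is ``identical in structure to the input-layer case,'' and your argument is precisely that carry-over, with the correct key observation that the time-dependent forcing $\omega_{\ell,p}J_\ell$ cancels in the difference so that the error obeys the homogeneous contraction, leaving only the post-reset mismatch $e_{\zeta,k}\to 0$ to control via $H_\zeta\to 1$ at transversal crossings. No gaps beyond those already present in the paper's own input-layer argument.
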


The uniform convergence of trajectories established in Propositions \ref{prop:relaxation_input} and \ref{prop:relaxation_hidden} guarantees that, away from spike events, the mollified solutions shadow the hybrid ones arbitrarily closely as $\zeta\to +\infty$. However, for spiking systems the most relevant qualitative feature is not the continuous evolution between resets but the precise timing of threshold crossings. To pass from convergence of trajectories to convergence of spike trains, one must ensure that the times at which the mollified trajectories hit threshold converge to the spike times of the hybrid system. This is not automatic, since a uniform approximation of the state does not by itself prevent small displacements in the moments of threshold crossing. The key property that makes this possible is the transversality of crossings in the hybrid system: at each spike time, the potential crosses the threshold with nonzero slope. 

We emphasize that assuming transversality of threshold crossings is both natural and non-restrictive. In fact, transversality simply requires that the membrane potential crosses the threshold with nonzero velocity, which corresponds precisely to the operational definition of a spike in spiking neuron models. Non-transversal events, where the trajectory grazes the threshold tangentially, arise only under finely tuned parameter configurations and thus form a measure-zero set. Excluding them does not limit the generality of the analysis and ensures that spike times are well-defined and stable under perturbations. Under this assumption, standard perturbation arguments (implicit function theorem) ensure that the mollified trajectories admit corresponding threshold crossings whose timings depend continuously on $\zeta$. This observation motivates the following proposition.

\begin{proposition}\label{prop:spike_times_convergence}
Under the assumptions of Proposition \ref{prop:relaxation_input} (resp., \ref{prop:relaxation_hidden}), let $\{t_k\}$ denote the spike times of the hybrid system \eqref{eq:model_pre} (resp., \eqref{eq:model_hidd}). Then, for each $t^\ast_k$ there exists a spike time $t_{\zeta,k}$ of the $\zeta$-mollified system such that
\begin{align*}
    t_{\zeta,k} \to t_k \quad \text{ as } \zeta \to +\infty.    
\end{align*}
Moreover, the entire spike train of the mollified system converges to that of the hybrid system.
\end{proposition}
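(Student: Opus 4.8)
The plan is to combine the uniform convergence of trajectories from Proposition \ref{prop:relaxation_input} (resp. \ref{prop:relaxation_hidden}) with the transversality hypothesis, locating each mollified spike time as a threshold crossing via the intermediate value theorem. I focus on the input neuron, the hidden-layer case being identical in structure. Recall that the mollified spike times are the instants at which $v_\zeta$ meets the threshold, $v_\zeta(t_{\zeta,k})=\theta_v$. Fix a hybrid spike time $t_k\in\MB T_0$; by transversality $\dot v(t_k)\neq 0$, and since $v$ crosses $\theta_v$ from below we have $\dot v(t_k)>0$, so $v-\theta_v$ is strictly negative just before $t_k$ and strictly positive just after.

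First I would fix an arbitrary small $\eta>0$, smaller than half the minimal gap between consecutive hybrid spikes (which exists since $\MB T_0$ is finite in $(0,T)$), so that $v(t_k-\eta)<\theta_v<v(t_k+\eta)$. By the uniform convergence $v_\zeta\to v$, there is $\zeta_0(\eta)$ such that for all $\zeta\geq\zeta_0$ one has $v_\zeta(t_k-\eta)<\theta_v<v_\zeta(t_k+\eta)$; the intermediate value theorem then yields a crossing $t_{\zeta,k}\in(t_k-\eta,t_k+\eta)$. Since $\eta$ is arbitrary, letting $\eta\to 0$ along $\zeta\geq\zeta_0(\eta)\to\infty$ gives $t_{\zeta,k}\to t_k$. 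Local uniqueness of this crossing comes for free from the dynamics themselves: at any instant with $v_\zeta=\theta_v$ the linear RC law forces $\dot v_\zeta=\tau_v^{-1}(\langle\*a,\*x_i\rangle-\theta_v)>0$, recalling that spikes occur only in the case $\langle\*a,\*x_i\rangle>\theta_v$ of Proposition \ref{prop:wp_input}; hence every threshold crossing is strictly transversal and isolated, which is precisely the continuous-dependence statement one would otherwise extract from the implicit function theorem. For a hidden neuron the slope at a crossing is instead $\tau_{\ell,p}^{-1}(\omega_{\ell,p}J_\ell(t_{\zeta,k})-\theta_{\ell,p})$, which is nonzero for large $\zeta$ by continuity of $J_\ell$ together with the assumed transversality $\dot\xi_{\ell,p}(t_k)\neq 0$.

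To upgrade this to convergence of the \emph{entire} spike train I would rule out spurious crossings. On each inter-spike subinterval the hybrid potential stays strictly below threshold, so on a compact subset bounded away from $\MB T_0$ there is a margin $v\leq\theta_v-\delta$; uniform convergence then forces $v_\zeta<\theta_v$ there for large $\zeta$, precluding any crossing away from the hybrid spikes. Combined with the local existence and uniqueness above, this produces a bijection between hybrid and mollified spikes for $\zeta$ large, with matched times converging, which is exactly the claimed convergence of the whole train.

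The main obstacle I anticipate is the behavior immediately after each soft reset. Since $D_\zeta(s;\theta_v)=(1-H_\zeta(s-\theta_v))s$ leaves a residual of order $(1-H_\zeta)\theta_v>0$ for finite $\zeta$, one must verify that $v_\zeta$ does not linger near $\theta_v$ and re-cross it spuriously right after firing. This is controlled by the same estimate used in Proposition \ref{prop:relaxation_input}, namely $1-H_\zeta(v_\zeta^--\theta_v)\to 0$ as $\zeta\to+\infty$, so the post-reset value tends to $0$ and the trajectory drops uniformly below the threshold before resuming its rise. Quantifying this residual uniformly over all spikes $k$ is the delicate point, and it is precisely where the transversality assumption and the explicit form of the discharge map must be used in tandem.
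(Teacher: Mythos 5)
Your proposal is correct and follows the same backbone as the paper's proof: uniform convergence of $v_\zeta$ to $v$ from Proposition \ref{prop:relaxation_input}, transversality of the hybrid crossing, and the intermediate value theorem to produce a mollified crossing $t_{\zeta,k}\in(t_k-\eta,t_k+\eta)$, with a diagonal argument in $\eta$ and $\zeta$ giving $t_{\zeta,k}\to t_k$. You diverge from the paper on two points, both to your advantage. First, where the paper establishes local uniqueness of the crossing by passing to $h_\zeta(t)=v_\zeta(t)-\theta_v$ and invoking the implicit function theorem (using that $\dot h_\zeta$ stays close to $\dot h$ near $t_k$), you read the sign of the slope directly off the RC law, $\dot v_\zeta=\tau_v^{-1}(\langle\*a,\*x_i\rangle-\theta_v)>0$ at any threshold instant; this is more elementary and shows that \emph{every} crossing of $v_\zeta$ is an upward, isolated one, not just those near $t_k$. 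Your remark that for hidden neurons the analogous slope $\tau_{\ell,p}^{-1}(\omega_{\ell,p}J_\ell(t)-\theta_{\ell,p})$ is only guaranteed nonzero near $t_k$ via continuity and the transversality hypothesis is also the right caveat. Second, and more importantly, you explicitly rule out spurious mollified spikes away from $\MB T_0$ via the margin argument ($v\leq\theta_v-\delta$ on compacts bounded away from the hybrid spike times, hence $v_\zeta<\theta_v$ there for large $\zeta$). The paper's proof does not contain this step: it constructs one mollified crossing per hybrid spike and then asserts convergence of ``the entire spike train,'' which strictly speaking also requires excluding extra crossings. Your bijection argument closes that gap. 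Your final paragraph correctly identifies the residual left by the soft discharge map as the remaining delicate point; the paper handles it (somewhat informally) inside the proof of Proposition \ref{prop:relaxation_input} via $1-H_\zeta(v_\zeta^- -\theta_v)\to 0$ under transversality, exactly as you indicate, so flagging rather than re-proving it is acceptable here.
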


\begin{proof} For brevity, we treat only the input neuron; the argument for hidden neurons is identical and left to the reader.

Fix a spike time $t_k$ of the hybrid system \eqref{eq:model_pre}, i.e. $v(t_k^-)=\theta_v$ and $v(t_k^+)=0$. By assumption, this threshold crossing is transversal, meaning $\dot v(t_k^-) \neq 0$. 

From Proposition \ref{prop:relaxation_input}, we know that $v_\zeta \to v$ uniformly on $(0,T)$. In particular, in a neighborhood $[t_k-\delta, t_k+\delta]$, we have
\begin{align*}
    \sup_{t\in[t_k-\delta,t_k+\delta]} |v_\zeta(t)-v(t)| \to 0 \quad\text{ as }\zeta\to +\infty.    
\end{align*}
Thus, for large $\zeta$, the trajectory $v_\zeta$ remains uniformly close to $v$ in this neighborhood.

Now, because $\dot v(t_k^-)\neq 0$, there exists $\delta>0$ such that $v(t)<\theta_v$ for $t<t_k-\delta$ and $v(t)>\theta_v$ for $t\in(t_k-\delta,t_k)$ (or the opposite inequality, depending on the sign of the slope). Uniform convergence then implies that for large enough $\zeta$, $v_\zeta$ must also cross the threshold $\theta_v$ somewhere in $(t_k-\delta,t_k+\delta)$. Define this crossing as $t_{\zeta,k}$ and consider the smooth function
\begin{align*}
    h_\zeta(t) \coloneqq v_\zeta(t)-\theta_v.    
\end{align*}

We have $h_\zeta(t_{\zeta,k})=0$. Moreover, by construction, $h_\zeta\to h(t)=v(t)-\theta_v$ uniformly near $t_k$ and $\dot h(t_k^-)=\dot v(t_k^-)\neq 0$. For large $\zeta$, $\dot h_\zeta$ remains close to $\dot h$, so it does not vanish in a small neighborhood of $t_k$. Hence the implicit function theorem guarantees that the zero of $h_\zeta$ in that neighborhood is unique and depends continuously on $\zeta$.

Because $t_{\zeta,k}\in(t_k-\delta,t_k+\delta)$ by construction, this continuity of the zero w.r.t. uniform perturbations then implies that
\begin{align*}
    t_{\zeta,k} \to t_k \quad \text{ as } \zeta\to +\infty.    
\end{align*}

Finally, since the hybrid system has finitely many isolated spikes, the argument can be repeated for each $t_k$. Thus the entire spike train of the mollified system converges to that of the hybrid system, with each spike time converging individually.
\end{proof}

\section{Simulation experiments}\label{sec:experiments}

To complement the theoretical contributions developed in the preceding sections, we present here a set of simulation experiments designed to illustrate the performance of the proposed spiking neural network (SNN) model on a benchmark classification problem. Although intentionally simple, these experiments serve to validate our framework and to highlight the expressive and computational properties of the architecture in practice.

All experiments presented in this section, together with the corresponding code, are available on \texttt{GitHub} \cite{Github}.

\subsection{Dataset and architecture description}

We employ the \texttt{make\_moons} dataset from the \texttt{scikit-learn} library, a widely used synthetic dataset for binary classification. The data consist of two interleaving half-circles embedded in the plane, which form a nonlinear decision boundary that cannot be captured by linear classifiers. Points are generated by sampling from these two half-moon manifolds and assigning binary labels according to membership. 

The dataset can be enriched with Gaussian noise, thereby introducing variability and overlap between the classes. In this spirit, we considered two variants (see Figure \ref{fig:dataset}): one with 0\% noise, where the classes are perfectly separable, and one with 20\% noise, where a significant fraction of the samples are perturbed, yielding overlapping distributions. The first variant represents a clean classification setting, while the second one provides a more realistic and challenging scenario that allows us to assess robustness to noise. Moreover, in both scenarios, we have split the dataset among 70\% training data, 5\% validation data, and 25\% testa data.

 \begin{figure}[!ht]
     \centering     
     \includegraphics[scale=0.95]{./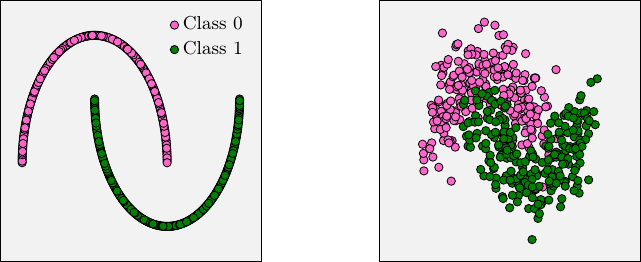}
     \caption{Training dataset with $0\%$ noise (left) and $20\%$ noise (right).}\label{fig:dataset}
 \end{figure}

Although the \texttt{make\_moons} dataset is simple compared to real-world data, it is well suited to our purposes. The main goal of this paper is not to propose large-scale benchmarks, but rather to investigate the mathematical underpinnings of SNN architectures, including their universal approximation properties and spike-trend dynamics. For this reason, a simple yet nonlinear dataset suffices to demonstrate the validity and potential of our framework.

As for the model's architecture, we employ a single hidden layer SNN ($L=1$) with $P=8$ neurons. Taking into account the input dimension $d=2$, this amounts at a total number of only $19$ trainable parameters. The architecture follows the dynamical modeling described in Sections \ref{sec:framework} and \ref{sec:training} of this paper, combining (possibly mollified) LIF dynamics in the input and hidden layers with a static readout for classification. The choice of a relatively small network is deliberate: given the limited complexity of the \texttt{make\_moons} dataset, adding depth or width would not necessarily improve performance. On the contrary, using a compact architecture demonstrates that our model achieves competitive results with minimal resources, highlighting the efficiency of the proposed design.

Finally, we have considered the following structural hyper-parameters for the LIF dynamics in our SNN architecture:
\begin{itemize}
    \item[] \textbf{Time horizon:} $T=60s$
    \item[] \textbf{Input layer:} $\tau_v=8$ and $\theta_v=0.8$
    \item[] \textbf{Hidden layers:} $\tau_{\ell,p}=6$ and $\theta_{\ell,p}=0.25$ for all $\ell\in\inter{L}$ and $p\in\inter{P}$
    \item[] \textbf{Output layer:} $\tau_u=10$ and $\theta_u=0.3$
    \item[] \textbf{Gaussian mollification:} $\mu=0.2$
\end{itemize}

As for the mollification hyperparameter $\zeta$, its value is not fixed but follows a smooth geometric schedule across training. Specifically, $\zeta$ is initialized at a low value $\zeta_0 = 3$ and gradually increased to $\zeta_1 = 10$ over the course of training epochs, according to the update rule 
\begin{align*}
    \zeta(e) = \zeta_0 \left(\frac{\zeta_1}{\zeta_0}\right)^{\frac{e}{E-1}},    
\end{align*}
where $e$ is the epoch index and $E$ the total number of epochs. This choice yields a progressive sharpening of the surrogate gradient while maintaining stability in the early stages. 

\subsection{Training and evaluation metrics}

Training is performed via surrogate-gradient optimization, following the regularized empirical risk minimization framework introduced in Section \ref{sec:training}. Since we are considering a binary classification problem, we have chosen the binary cross entropy as training loss function. To evaluate performance, we consider four standard classification metrics:
\begin{itemize}
    \item[] \textbf{Accuracy}, measuring the overall proportion of correct predictions;
    \item[] \textbf{Precision}, assessing the reliability of positive predictions;
    \item[] \textbf{Recall}, measuring the ability to correctly identify all positive samples;
    \item[] \textbf{F1}-\textbf{score}, the harmonic mean of precision and recall, providing a single measure that balances the trade-off between false positives and false negatives.
\end{itemize}

Simulations have been performed in \texttt{Python} on a Acer TravelMate P414-53 laptop with OS Ubuntu 24.04.3 LTS, 13th Gen Intel\textsuperscript{\textregistered} Core\textsuperscript{\texttrademark} i5-1335U × 12 processor and 32 GiB of RAM memory.

We report results on training, validation, and test splits, in order to assess generalization. On the dataset with 0\% noise, the model achieves accuracy, precision, and F1-score in the range 88\%–92\%, consistently across training, validation, and test sets. The recall metric is slightly lower, around 84\%, reflecting a mild tendency of the classifier to miss some positive samples (see Figure \ref{fig:metrics_clean} and Table \ref{tab:metrics}). Overall, these results confirm that the proposed SNN successfully captures the nonlinear decision boundary of the dataset while maintaining generalization.

 \begin{figure}[!ht]
     \centering     
     \includegraphics[scale=0.95]{./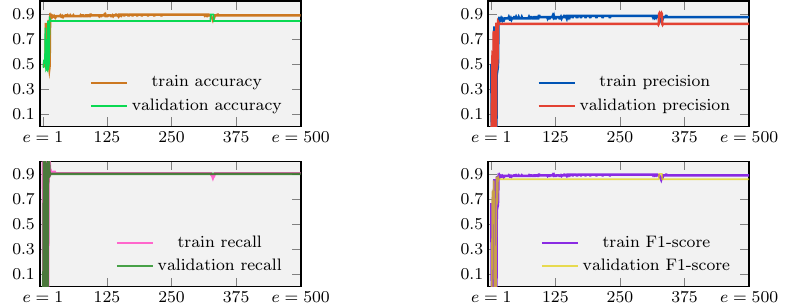}
     \caption{Training and validation metrics on the dataset with $0\%$ noise.}\label{fig:metrics_clean}
 \end{figure}

\begin{table}[!ht]
    \centering
    \begin{tabular}{|c|c|c|c|c|}
         \hline & \cellcolor{medioumgray}\textbf{Accuracy} & \cellcolor{medioumgray}\textbf{Precision} & \cellcolor{medioumgray}\textbf{Recall} & \cellcolor{medioumgray}\textbf{F1-score}
         \\
         \hline \cellcolor{medioumgray}\textbf{0\% noise} & 0.8889 & 0.9286 & 0.8387 & 0.8814
         \\
         \hline \cellcolor{medioumgray}\textbf{20\% noise} & 0.8413 & 0.8889 & 0.7742 & 0.8276
         \\
         \hline
    \end{tabular}
    \caption{Test metrics obtained by our SNN on the \texttt{make\_moons} dataset with 0\% and 20\% noise.}\label{tab:metrics}
\end{table}

On the dataset with 20\% noise, performance understandably decreases, with metrics lying in the range 75\%–88\% (see Figure \ref{fig:metrics_noise} and Table \ref{tab:metrics}). Among them, recall is again the lowest, reflecting the greater difficulty of recovering true positives in the presence of noisy, overlapping samples. Nonetheless, precision and accuracy remain close to 90\%, indicating that the model preserves a strong discriminative ability even under perturbations.

 \begin{figure}[!ht]
     \centering
     \includegraphics[scale=0.95]{./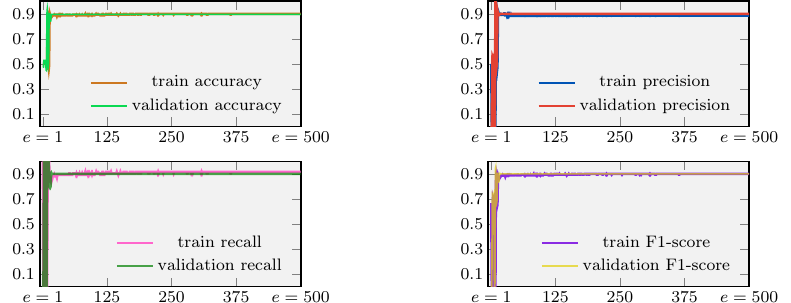}
     \caption{Training and validation metrics on the dataset with $20\%$ noise.}\label{fig:metrics_noise}
 \end{figure}

We shall also stress that, in both experimental scenarios we have considered, training and validation metrics are systematically higher than those obtained on the test set (around 90\% versus slightly lower values on test). This behavior is natural: since the test set contains previously unseen samples, including challenging points near the nonlinear boundary or affected by noise, slightly reduced performance is expected. Importantly, the gap remains modest, showing that the model does not suffer from significant overfitting.

These results support several key conclusions. First, they confirm that even a small SNN \textendash\, comprising only a single hidden layer with 8 neurons \textendash\, can efficiently solve nonlinear classification tasks, in line with the universal approximation principles established in Section \ref{sec:UA}. Second, the experiments show that the model remains robust to moderate levels of noise, maintaining good performance across metrics. Although recall drops both in the clean and noisy regimes, this effect is expected and does not compromise the overall effectiveness of the classifier. Indeed, the fact that accuracy and precision remain high underscores the stability and reliability of the spiking dynamics. Finally, the observation that training and validation metrics are consistently higher than test ones is not a drawback but rather an indication of good generalization: the model maintains strong performance even on unseen and more difficult data.

Overall, these experiments, while intentionally simple, validate the theoretical analysis developed throughout the paper. They demonstrate that the proposed SNN architecture is not only mathematically sound but also effective in practice, even under constrained architectures and noisy conditions.

\section{Conclusions and open problems}\label{sec:conclusions}

In this work we have developed a rigorous theoretical framework for Spiking Neural Networks (SNNs), focusing on two central aspects: their expressive power and the dynamical constraints that govern spike transmission. By proving a constructive universal approximation theorem for SNNs built from Leaky Integrate-and-Fire (LIF) neurons with threshold-reset dynamics, we established that narrow spiking architectures can approximate any continuous function on compact domains to arbitrary accuracy. Complementing this, our analysis of spike propagation across layers revealed a general non-increasing trend in spike counts, with precise criteria for the exceptional regimes \textendash\, such as overlapping presynaptic spikes and high gain-to-threshold ratios \textendash\, where spike counts may increase. Taken together, these results offer a principled mathematical foundation for understanding both the power and the limitations of SNNs, providing guarantees that support their role in classification, signal processing, and beyond.

At the same time, our findings open several new directions for further research. We conclude by highlighting a number of open problems which we believe are particularly promising:
\begin{itemize}
    \item[1.] \textbf{Depth}–\textbf{width tradeoffs in SNNs.} While our universality result is constructive, it does not quantify the minimal number of neurons per layer or the necessary depth for approximation within a given error tolerance. Establishing precise depth–width tradeoffs, analogous to those known for classical ANNs and ResNets \cite{alvarez2024interplay}, would provide sharper guidelines for designing efficient spiking architectures.
    \item[2.] \textbf{Quantitative approximation rates.} Our universal approximation Theorem \ref{thm:UA} guarantees density in the space of continuous functions but does not provide explicit approximation rates in terms of the number of neurons, spikes, or layers. Deriving such rates would enable comparisons between SNNs and traditional neural networks and clarify whether temporal coding confers efficiency advantages.
    \item[3.] \textbf{Training under hybrid dynamics.} The introduction of mollifiers in the LIF dynamics ensures well-posedness and differentiability, but the resulting surrogate models are only approximations of true spiking dynamics. A central challenge is to design principled training algorithms that directly exploit the hybrid nature of spike-reset systems while still yielding tractable gradients, perhaps by combining tools from optimal control and non-smooth analysis.
    \item[4.] \textbf{Generalization and capacity bounds.} Beyond approximation, little is known about the generalization properties of SNNs. Establishing statistical learning bounds \textendash\, such as Rademacher complexity \cite{bartlett2002rademacher} or VC-dimension \cite{vapnik1999overview} estimates tailored to spike-based models  \textendash\, remains an open question with immediate implications for their reliability in practical tasks.
    \item[5.] \textbf{Dynamics beyond LIF models.} Our analysis focused on LIF neurons, but biologically realistic neurons exhibit richer dynamics (e.g., adaptive thresholds, conductance-based models, or stochastic firing). Extending universal approximation results and dynamical analyses to these broader families would deepen the connection between theoretical SNNs and neuroscience.
\end{itemize}

In summary, this work provides a foundational step toward a mathematically rigorous theory of SNNs. Addressing the open problems listed above will be crucial to bridge the gap between their theoretical expressiveness and practical performance, ultimately advancing both neuromorphic computation and our understanding of biological neural systems.

\bibliographystyle{abbrv}
\bibliography{bibliography}

\end{document}